\documentclass[a4paper,11pt]{amsart}
\usepackage{graphicx} % Required for inserting images
\usepackage{amsfonts, amssymb, amsmath}
\usepackage{amsthm}
\usepackage{mathtools}
\usepackage[margin=1in]{geometry}
\usepackage{enumerate}
\usepackage[all]{xy}
\usepackage{color, xcolor}

\usepackage{tikz, tikz-cd}
\usetikzlibrary{3d}
\usepackage{caption}
\usepackage{subcaption}
\usepackage[all]{xy}

\newcommand{\PP}{\mathbb{P}}
\newcommand{\CC}{\mathbb{C}}
\newcommand{\FF}{\mathbb{F}}
\newcommand{\QQ}{\mathbb{Q}}
\newcommand{\ZZ}{\mathbb{Z}}

\newcommand{\si}{\sigma}
\newcommand{\Si}{\Sigma}

\newcommand{\cO}{\mathcal{O}}
\newcommand{\cM}{\mathcal{M}}
\newcommand{\cN}{\mathcal{N}}
\newcommand{\cY}{\mathcal{Y}}
\newcommand{\cB}{\mathcal{B}}
\newcommand{\cT}{\mathcal{T}}

\DeclarePairedDelimiter\prbra{(}{)}

\theoremstyle{definition}

\newtheorem{Thm}{Theorem}[section]
\newtheorem{Prop}[Thm]{Proposition}

\newtheorem{Rmk}[Thm]{Remark}
\newtheorem{Cor}[Thm]{Corollary}
\newtheorem{Ex}[Thm]{Example}

\usepackage{epic}

\begin{document}

\title{Deformation of pairs of $\PP^3$ and hypersurfaces}
\author{Jungkai Chen, Yongnam Lee and Phin-Sing Soo}

\address{Department of Mathematics, National Taiwan University, No. 1, Sec. 4, Roosevelt Rd., Taipei 10617, Taiwan}
\email{jkchen@ntu.edu.tw}

\address{Center for Complex Geometry, Institute for Basic Science (IBS), 55 Expo-ro, Yuseong-gu, Daejeon, 34126 Korea}
\email{ynlee@ibs.re.kr}

\address{Graduate School of Mathematical Sciences, The University of Tokyo, 3-8-1 Komaba, Meguro, Tokyo, 153-8914 Japan}
\email{pssoo@g.ecc.u-tokyo.ac.jp}

\subjclass{14J10, 14J70}
\keywords{deformation, hypersurface, canonical singularity}

\date{\today}

\begin{abstract}
Motivated by DeVleming's work on moduli of surfaces in $\PP^3$ and Chen-Hu-Jiang's work on moduli of threefolds with volume $2$ and geometric genus $4$, we study the deformation of pairs of $\PP^3$ and hypersurfaces using the classification of $\QQ$-Gorenstein degenerations of $\PP^3$ with canonical singularities. We prove that if a degenerating threefold has canonical singularities, then the moduli space is smooth at the corresponding pair. Consequently, we find some boundary divisors of the moduli of smooth hypersurfaces. Finally, using the double cover method, we derive some information on the moduli space of threefolds $X$ with canonical singularities with the same volume and geometric genus as a double cover of $\PP^3$ branched over a hypersurface.
\end{abstract}

\maketitle

\section{Introduction}
In this paper, we work over the field of complex numbers. Let $Y$ be a normal projective variety. Suppose that $Y$ is a $\QQ$-Gorenstein degeneration of $\PP^3$, that is, there exists a flat projective  morphism $\pi: \cY\to\Delta$ over a DVR with the central fiber $Y$ such that $\pi^{-1}(t)=Y_t\cong \PP^3$ for $t\ne 0$ and $K_{\cY/\Delta}$ is $\QQ$-Cartier. If $Y$ has at worst terminal singularities, then $Y\cong \PP^3$ (cf. \cite[Theorem 4.33]{DeV22}) \cite[Corollary 1.4]{HP24}). However, $Y$ is not necessarily isomorphic to $\PP^3$ if $Y$ has canonical singularities. Due to H\"oring and Peternell \cite[Theorem 1.3]{HP24}, normal projective canonical degenerations $Y$ of $\PP^3$ are classified; they belong to one of the following possible cases. 
\begin{itemize}
    \item[a)] (type I) $Y\cong \PP^3$.
    \item[b)] (type II) Let $T=\mathbb F_0$ and $V=\PP(\cO_T\oplus\cO_T(-2, -2))$, then we have $p: V\to Y$ given by the contraction to the cone.
    \item[c)] (type III) $Y \cong \PP(1,1,2,4)$, which is a cone over $S=\PP(1,1,2)$. 
    Another detailed description is as follows. Let $T=\mathbb F_2$ and $V=\PP(\cO_T\oplus\cO_T(-2\si-4\ell))$ where $\si$ is the negative section and $\ell$ is a fiber in $T$. Then  $p: V\to Y$ is obtained by weighted blowup $\frac{1}{4}(1,1,2)$ and then blowup the $1$-dimensional $A_1$ singularity.
    \item[d)] (type IV) Let $Y'\in |2\zeta-p^*12 \ell|$ on the projective bundle $p: \PP(V)\to T=\FF_4$ for $V = \cO_T(12 \ell + 2 \sigma) \oplus \cO_T(6 \ell) \oplus \cO_T$. Here, $\zeta$ is the tautological bundle on $\PP(V)$; $\sigma$ is the negative section, and $\ell$ is the fiber in $T$.
    Then the fibration $\varphi: Y'\to T$ is a conic bundle that has double fibers over $\sigma$ and smooth fibers elsewhere, and $Y'$ has canonical singularities along a section $\pi^{-1}(\sigma)$. The morphism $\phi: Y'\to Y$ to the anticanonical model $Y \subset \PP^{34}$ contracts exactly the $\pi$-section $E_1$ corresponding to $V\to\cO_T$. Moreover, $\phi^*(-K_{Y})=-K_{Y'}$.
\end{itemize}

The type IV case is also described in \cite[Section 4.2]{ADL23}. We note that the threefold $Y$ of type IV has a unique canonical Gorenstein singularity, but it is not $\QQ$-factorial. 
In fact, $L$ in $Y$ has Cartier index 4, where $L$ is the degeneration of $\cO_{\PP^3}(1)$. 
We describe the relation between the two constructions and a resolution of type IV in Section~\ref{sec:type IV}. 

Let $\cN_d$ be the moduli space of pairs $(\PP^3, B_d)$ where $B_d$ is a smooth hypersurface of degree $d$. For $d\ge 5$, DeVleming \cite{DeV22} showed that $\cN_d$ can be compactified by the moduli space $\overline{\cN}_{(d, 4)}$ of the pairs $(Y, B_{d, Y})$ satisfying the following conditions:
\begin{itemize}
    \item[(i)] the pair $(Y, (4/d+\epsilon)B_{d, Y})$ is slc and the divisor $K_Y + (4/d + \epsilon)B_{d, Y}$ is ample for some  $\epsilon >0$;
\item[(ii)] the divisor $dK_Y + 4B_{d, Y}$ is linearly equivalent to zero;
\item[(iii)] there is a deformation $(\cY, \cB_{d, \cY})/\Delta$ of $(Y, B_{d, Y})$ over the germ of a curve such that the general
fiber $Y_t$ is smooth, $Y_t$ admits a smooth deformation to $\PP^3$, and the divisors $K_{\cY/\Delta}$ and $\cB_{d, \cY}$ are
$\QQ$-Cartier.
\end{itemize}

She proved that if $(Y, B_{d, Y})\in\overline{\cN}_{(d, 4)}$ with canonical singularities and $d$ is odd then $Y\cong\PP^3$ or isomorphic to the cone over a quadric surface \cite[Theorem 1.3]{DeV22}. In addition, she shows that these pairs form a divisor in $\overline{\cN}_{(d, 4)}$ \cite[Proposition 3.10 and Proposition 4.40, 4.41]{DeV22}.

When $d=2d_1$ is even, object in $\cN_d$ can be understood as a threefold which is a double cover of $\PP^3$ branched over $B_d$. More precisely, since $B_d$ is assumed to be smooth, we have the isomorphism between the moduli space of  the pairs $(\PP^3, B_d)$ and the moduli space of  the pairs $(\PP^3, \frac{1}{2}B_d)$, and the latter one is isomorphic to the moduli of threefolds which are double cover of $\PP^3$ branched over $B_d$.

Let $p: X\to Y=\PP^3$ be a double cover branched over $B_d$. Since $$K_X=p^*(K_{\PP^3}+\frac{1}{2}B_d),$$
we have ${\rm Vol}(X)=2(d_1-4)^2$ and $p_g(X)=\frac{1}{6}(d_1-1)(d_1-2)(d_1-3)$. Let $\cM_d^\mathrm{can}$ (resp. $\cM_d^\mathrm{sm}$)  be the moduli space of threefolds $X$ with canonical singularities (resp. without singularities) satisfying ${\rm Vol}(X)=2(d_1-4)^2$ and $p_g(X)=\frac{1}{6}(d_1-1)(d_1-2)(d_1-3)$. Through this double cover correspondence, $\cN_d\subseteq \cM_d^\mathrm{can}$ if $d$ is even. We note that $X$ has semi log canonical singularities if and only if $(Y, \frac{1}{2}B_{d, Y})$ has semi log canonical singularities (cf. \cite[Proposition 3.16]{Kol97}). The moduli space $\cM_d^\mathrm{can}$ can be compactified by the KSBA moduli space $\overline{\cM}_d^\mathrm{KSBA}$. We refer to \cite{Kollar} for the KSBA moduli spaces.

\medskip
In this paper, we study the boundaries of $\overline{\cN}_{(d, 4)}$ and $\cM_d^\mathrm{can}$ when $d$ is even through the $\QQ$-Gorenstein degeneration of $\PP^3$ with canonical singularities. 

First, we prove the following theorem.
\begin{Thm}
The moduli space $\overline{\cN}_{(d, 4)}$ is smooth at the pair $[(Y, B_{d, Y})]$ if $Y$ has canonical singularities.
\end{Thm}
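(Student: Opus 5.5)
We will show that the $\QQ$-Gorenstein deformations of the pair $(Y,B)$, with $B=B_{d,Y}$, are unobstructed. Smoothness of $\overline{\cN}_{(d,4)}$ at $[(Y,B)]$ then follows, because the moduli stack is locally the quotient of the versal $\QQ$-Gorenstein deformation space of the pair by the finite automorphism group. The approach is to control the obstruction space $T^2_{\QQ G}(Y,B)$ via two pieces: the $\QQ$-Gorenstein deformations of $Y$ alone, and the deformations of the divisor $B$ inside a fixed deformation of $Y$.

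\textbf{Unobstructedness of $Y$.} By the Höring--Peternell classification, $Y$ is of type I, II, III or IV. In every case $Y$ is a Fano threefold with canonical (Gorenstein in types I, II, IV; the index-one cover in type III) singularities. For such varieties the local-to-global spectral sequence gives the obstruction space in terms of $H^2(Y,T_Y)$, $H^1(Y,\mathcal{T}^1_{\QQ G})$ and $H^0(Y,\mathcal{T}^2_{\QQ G})$.
\begin{itemize}
\item The term $H^2(Y,T_Y)$ vanishes by a Kodaira--Akizuki--Nakano type vanishing on the Fano $Y$ (pull back to the resolutions $V$ or $Y'$ described above and use $-K_Y$ ample).
\item $H^1(Y,\mathcal{T}^1_{\QQ G})$ vanishes because $\mathcal{T}^1$ is supported on the singular locus: a point in types II, III, IV, and in type III possibly the $A_1$ curve, which is $\PP^1$.
\item The local term $\mathcal{T}^2_{\QQ G}$ vanishes for the relevant singularities. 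The vertex in type II is a cone over $\mathbb F_0$ embedded by $\cO(2,2)$. Type III is the $\frac14(1,1,2)$ point together with the transverse $A_1$. Type IV is a cDV-like Gorenstein point. I would treat these singularities case by case, for example using that the index-one covers are hypersurface or cone singularities with known $T^2$, or that the cone over $(\mathbb F_0,\cO(2,2))$ has $T^2=0$ in the relevant $\QQ$-Gorenstein (degree $-K$ compatible) graded pieces by Altmann-type computations.
\end{itemize}

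\textbf{Deformations of the divisor.} Next consider the forgetful map from deformations of $(Y,B)$ to $\QQ$-Gorenstein deformations of $Y$. The condition $dK_Y+4B\sim 0$ forces $B\in|\cO_Y(dL)|$, where $L$ is the degeneration of $\cO_{\PP^3}(1)$ and $-K_Y\sim 4L$. Here $\cO_Y(dL)$ is a divisorial sheaf, Cartier only when $4\mid d$ in type IV. The fibres of the forgetful map are governed by $H^0(\cO_B(B))$, with obstructions in $H^1(\cO_B(B))$. Using the sequence $0\to\cO_Y\to\cO_Y(B)\to\cO_B(B)\to0$, it suffices to have $H^1(Y,\cO_Y(dL))=H^2(Y,\cO_Y)=0$. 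These vanishings follow from Kawamata--Viehweg vanishing for the Weil divisor $dL$ with $dL-K_Y$ ample, applied on $Y$ with klt singularities. One must also ensure that the divisorial sheaf $\cO_{\cY}(dL)$ extends over any $\QQ$-Gorenstein deformation, i.e.\ that $\QQ$-Cartierness of $\cB$ is preserved. This holds because $L$ extends: its local class group is torsion-free, or equals $\ZZ/4$ and is generated by $K$, in the types considered, and $H^2(Y,\cO_Y)=0$ kills the global obstruction to extending $L$. Combining the two steps, $T^2$ of the pair vanishes, and the smoothness follows.

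\textbf{Main obstacle.} The main difficulty will be the local computation $\mathcal{T}^2_{\QQ G}=0$, together with extending $L$, at the type II vertex and the type IV non-$\QQ$-factorial point. There the singularity is not a hypersurface singularity, and $L$ has Cartier index $4$ while $K$ is Cartier. My first attempt is to realize each singularity explicitly: the type II vertex as the cone over the Veronese-type embedding of $\mathbb F_0$, and the type IV point via the conic bundle $Y'\to\FF_4$. I would then compute its $T^2$ in the degrees relevant to $\QQ$-Gorenstein smoothings, or alternatively show that the versal deformation is smooth by exhibiting it as a complete intersection (a total space of a flat family of Fano threefolds) whose dimension equals $h^1$ of the tangent complex.
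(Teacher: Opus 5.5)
Your outline is the paper's outline: unobstructedness of the $\QQ$-Gorenstein deformations of $Y$, plus $H^1(\cN_B)=0$ from Kawamata--Viehweg vanishing and $H^2(\cO)=0$, combined via \cite[Proposition 3.3]{Has99}. The divisor half is fine once $B$ is Cartier. The gap is the first half, and that is the entire content of the new type IV case. You leave it as the ``main obstacle'', and the local-to-global route you sketch has steps that fail.
\begin{enumerate}
\item Pulling back to the resolution does not by itself give $H^2(Y,\cT_Y)=0$. On the type II resolution $Y'=\PP_{\FF_0}(\cO\oplus\cO(-2,-2))$ the paper finds $h^2(\cT_{Y'})=1$, coming from $H^2(\FF_0,\cO(-2,-2))$, and likewise in type III.
\item Being supported on $\PP^1$ does not make $H^1(\mathcal T^1)$ vanish. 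You would need the degree of $\mathcal T^1$ along the $A_1$ curve.
\item Local $\mathcal T^2=0$ is neither proved nor needed; smoothness of the local versal bases would suffice. Nothing suggests it holds at a non-complete-intersection point like the type II vertex.
\item Most seriously, the type IV point is not ``cDV-like''. The map $\psi\colon Y_1\to Y$ contracts the surface $\Sigma_1$ crepantly to $p$, so there is a crepant divisor centred at $p$ and a general hyperplane section through $p$ is not Du Val. You give no argument for this point.
\end{enumerate}
The paper does no such local analysis. It takes the deformation theory of $Y$ from \cite[Lemma 5.4]{ADL23} and \cite[Proposition 3.10]{DeV22} for types I--III, and from \cite[Lemma 4.15]{ADL23} for type IV.

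You also miss the reduction that makes the divisor step painless in type IV, and your substitute for it is incorrect. By the paper's proposition on type IV, a pair exists only if $4\mid d$: the conic fibration of a general weak del Pezzo fibre $\tilde S$ has a double line, while $C=B\cdot\tilde S$ has degree $d/2$ on conics. In that case $B$ is smooth and misses $p$. Hence $B\sim-\frac d4K_Y$ is Cartier and extends over any $\QQ$-Gorenstein deformation together with $K_{\cY/\Delta}$. Then $H^1(\cN_B)=0$ follows from Kawamata--Viehweg vanishing on the crepant resolution $\tilde Y$ (where $-K_{\tilde Y}$ is nef and big) and $H^2(\cO_{\tilde Y})=0$.

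Your class-group argument for extending $L$ is wrong in every singular type. $K_Y$ is Cartier in all four types, so its local class is trivial and it generates nothing; this includes type III, where $-K=\cO(8)$ on $\PP(1,1,2,4)$ and $\frac14(1,1,2)$ is Gorenstein. The type II vertex has local class group $\ZZ^2/\ZZ(2,2)\cong\ZZ\oplus\ZZ/2$. The $\frac14(1,1,2)$ point has local class group $\ZZ/4$, generated by $\cO(1)$ rather than $K$. The type IV point is not $\QQ$-factorial.

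To repair the proposal, keep your divisor step and add the type IV reduction. Then replace the local $\mathcal T^2$ programme by the cited unobstructedness of $Y$, or by a genuine proof of it.
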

When $Y$ is of type I, II, or III,  this theorem is also mentioned in \cite[Proposition 3.10]{DeV22} and \cite[Lemma 5.4]{ADL23}. The new ingredient is the type IV case, which will be proved in Section~\ref{sec:type IV}.

\medskip 

In Section~\ref{sec:type IV}, by using the idea in \cite{DeV22} and the description of the type IV, we show that $(Y, B_{d, Y})$ cannot be in $\overline{\cN}_{(d, 4)}$ if $d\not\in 4\ZZ$. This is shown in \cite{DeV22} when $d$ is odd.

As a corollary, we find some boundary divisors of the moduli of smooth hypersurfaces. For instance, we have the following result. This is proved in Proposition~\ref{surface in type II} and Corollary~\ref{surface in type IV}.

\begin{Cor}
    Let $d=2d_1\in 4\ZZ$. Then we find two smooth boundary divisors via the type II and IV degeneration of $\PP^3$.
    \begin{enumerate}
    \item By considering the type II, there is a smooth boundary divisor whose general element is a smooth complete intersection of multi-degrees $(2, d_1)$ in the weighted projective surface $\PP(1,1,1,1,2)$
    \item By considering the type IV, there is a smooth boundary divisor whose general element is a smooth surface with a pencil structure.
    \end{enumerate}
\end{Cor}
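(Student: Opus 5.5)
The corollary has two parts, and in each I would do the same three things. First, describe the general pair $(Y,B_{d,Y})$ with $Y$ of the given type in $\overline{\cN}_{(d,4)}$. Second, show that such pairs form a locus of codimension one. Third, get smoothness of this locus from the Theorem above: $\overline{\cN}_{(d,4)}$ is smooth at every pair with $Y$ canonical. I would also use that the locus of pairs with $Y$ of a fixed type is, locally, the preimage of the locus in the deformation space of $Y$ where the singularity is not smoothed. Since the $\QQ$-Gorenstein deformation space of $Y$ should be smooth of dimension one in the transverse (smoothing) direction, this locus is a smooth Cartier divisor.

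\textbf{Part (1).} The type II threefold $Y$ is the cone over the quadric $\FF_0\subset\PP^3$, that is, $Y\cong\PP(1,1,1,1,2)\supset Y=\{q(x_0,\dots,x_3)=0\}$ with $q$ a rank-$3$ quadric. More accurately, $Y$ is the quadric cone in $\PP(1,1,1,1,2)$ given by $\{x_0x_1-x_2x_3=0\}$, whose vertex is the point $(0{:}0{:}0{:}0{:}1)$.
\begin{itemize}
\item The $\QQ$-Gorenstein smoothing of $Y$ is the family $\{q_t=ty\}$ with $y$ the weight-$2$ coordinate. For $t\neq0$ one can eliminate $y$ and recover $\PP^3$, so $Y$ is a hypersurface of degree $2$ in $\PP(1,1,1,1,2)$.
\item Under this smoothing, $\cO(1)$ extends as a Cartier divisor, so $B_{d,Y}$ is cut out by a weighted polynomial of degree $d_1$. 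I would write $d=2d_1$ and use that $d_1$ must be even for $B$ to pass through the vertex correctly, which is where $d\in4\ZZ$ enters. Hence $B_{d,Y}$ is a complete intersection of degrees $(2,d_1)$.
\item A general such intersection avoids the vertex (this needs $y^{d_1/2}$ to appear, so $d_1$ even) and is smooth by Bertini.
\item The slc and ampleness conditions hold because $Y$ is canonical and $B_{d,Y}$ is smooth and misses the singular point.
\end{itemize}
Dimension count: the pairs on $Y$ have one fewer modulus than pairs on $\PP^3$, by comparing $h^0$ and automorphism groups, or directly because the smoothing parameter $t$ is a single coordinate. This gives a divisor, and it is smooth by the Theorem.

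\textbf{Part (2).} I would use the description of the type IV threefold in Section~\ref{sec:type IV}. There $\phi:Y'\to Y$ with $\phi^*(-K_Y)=-K_{Y'}$, and $Y'\to\FF_4$ is a conic bundle. Take $B_{d,Y}\in|\frac d4(-K_Y)|$, which is Cartier exactly when $d\in4\ZZ$ because $L$ has Cartier index $4$.
\begin{itemize}
\item A general member avoids the unique singular point and is smooth. For this I would show, via $Y'$, that $|{-K_{Y'}}|$ is base point free away from the contracted section $E_1$, and apply Bertini.
\item Composing $B\hookrightarrow Y'\to\FF_4\to\PP^1$ gives the pencil structure.
\end{itemize}
For the codimension-one claim I would compute $h^0(Y,\cO(\tfrac d4(-K_Y)))$ by Riemann--Roch on $Y'$. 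It should equal $h^0(\PP^3,\cO(d))$, by deformation invariance of $\chi$ together with Kawamata--Viehweg vanishing. I would then compare with $\dim\mathrm{Aut}(Y)$ and with the one-dimensional smoothing direction of $Y$; Section~\ref{sec:type IV} supplies that the local $\QQ$-Gorenstein deformations of the singularity form a smooth germ.

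The main obstacle is this last point for type IV. I need to show that the non-smoothed locus is exactly a divisor, i.e.\ that the equisingular deformations of $Y$ have codimension one in the smooth deformation space of the pair. My first attempt would be to compute the local $T^1_{\QQ Gor}$ of the Gorenstein canonical singularity from its explicit equation. I would check that it is one-dimensional in the relevant direction, or that it surjects to a one-dimensional space, and then transfer this to the pair using the smoothness from the Theorem. A secondary risk is the Bertini step near $E_1$, which I would handle by working on $Y'$.
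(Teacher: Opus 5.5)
\textbf{There are two genuine gaps: a wrong linear system in part (1), and an unproved codimension-one claim in part (2).}

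\textbf{Part (1): wrong linear system.} The sheaf $\cO_Y(1)$ does not extend as a Cartier divisor. It has index $2$ at the vertex, as the paper notes for $H_{Y_\lambda}$; only $\cO_Y(2)$ is Cartier there. Since $K_Y=\cO_Y(-4)$, condition (ii) forces $B_{d,Y}\in|\cO_Y(d)|$. So the second equation has weighted degree $d=2d_1$. It is $w^{d_1}+\sum_{i\ge1}w^{d_1-i}a_{2i}(x)$, as in the proof of Proposition~\ref{surface in type II}, and the ``$d_1$'' of the statement can only mean its degree in $w$. A surface cut out by a polynomial of weighted degree $d_1$ satisfies $d_1K_Y+4B\sim0$, so it lies in $\overline{\cN}_{(d_1,4)}$, not $\overline{\cN}_{(d,4)}$.

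In particular $w^{d_1}$ occurs for every even $d$, so $d\in4\ZZ$ plays no role in (1); the paper's type II results hold for all even $d$. The hypothesis is needed only in (2), where $dL$ is Cartier iff $4\mid d$ and Proposition~\ref{type IV} excludes $d\equiv2\pmod 4$.

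Once this is corrected, your smoothing $q=ty$ is the paper's argument, and your automorphism count works. Graded automorphisms of $\CC[x_0,\dots,x_3,y]/(q)$ contribute $7$ from $\CC^*\cdot O(q)$ and $10$ from $y\mapsto ay+Q(x)$ with $Q$ taken modulo $q$. Removing the weighted $\CC^*$ gives $\dim\mathrm{Aut}(Y)=16$, one more than for $\PP^3$, while $h^0(\cO_Y(d))=h^0(\cO_{\PP^3}(d))$ by flatness. The paper instead reads off codimension one from $h^1(\cT_{X'})=\binom{d+3}{3}-17$ in Section~\ref{sec:Type_II}.

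\textbf{Part (2): the codimension-one claim is not proved.} You name it as the main obstacle but do not settle it. Your statement that Section~\ref{sec:type IV} supplies a smooth germ of local $\QQ$-Gorenstein deformations is not accurate. That section gives:
\begin{itemize}
\item the birational models and the pencil;
\item $p\notin B_{d,Y}$, via the Case 2 argument of \cite[Theorem 4.35]{DeV22};
\item $H^1(\cN_B)=0$, by Kawamata--Viehweg on $\tilde Y$.
\end{itemize}
Unobstructedness is imported from \cite[Lemma 4.15]{ADL23} and \cite[Proposition 3.3]{Has99}. In any case, smoothness of a germ says nothing about its dimension.

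Given smoothness and your correct count $h^0(Y,\frac d4(-K_Y))=\binom{d+3}{3}$, the missing input is that the type IV threefold has no moduli and $\dim\mathrm{Aut}(Y)=16$. Equivalently, its $\QQ$-Gorenstein deformation space must be one-dimensional. Without this, the type IV locus could have larger codimension.

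A local $T^1$ computation at $p$ is much harder than you suggest. The point $p$ is the image of the curve of $cD_5$ points of $Y_\flat$, not a cone point, and you would still need a local-to-global step. The paper does not prove this step either: Corollary~\ref{surface in type IV} rests on the deformation theory of $Y$ from \cite{ADL23}. For $d=4$ this locus corresponds to the degeneration of quartics to elliptic K3 surfaces.

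\textbf{Smaller point on avoiding $p$.} Base point freeness of $|{-K_{Y'}}|$ away from $E_1$ does not make a general $B$ miss $p$. You need a member disjoint from $E_1$. This holds because $|{-K_Y}|$ is base point free, since $Y\subset\PP^{34}$ is the anticanonical model.
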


\medskip

We call a threefold of Type II (resp. Type III) if it is a double cover of the type II (resp. the type III) threefold $Y$ branched over $B_{d, Y}$.
Using the toric description of the resolution of the type II or III, along with the exact sequence of tangent sheaves between the double cover threefold and the threefold below, we prove the following result in Section~\ref{sec:Type_II} and Section~\ref{sec:type_III}.

\begin{Thm} Let $d=2d_1$ be even.
\begin{enumerate} 
    \item Type II gives a boundary divisor in the moduli space $\cM_d^\mathrm{can}$ and the smoothness of $\cM_d^\mathrm{sm}$ extends this boundary. 
    \item If $d_1$ is even then the moduli space $\cM_d^\mathrm{can}$ at Type III is smooth. 
\end{enumerate}
\end{Thm}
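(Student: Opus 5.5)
The plan is to analyze the deformation theory of the double cover $X \to Y$ through the tangent-sheaf exact sequences mentioned above, and reduce everything to cohomology on a toric resolution of $Y$. For a double cover $p: X\to Y$ branched over $B=B_{d,Y}$ with $L$ a (Weil, $\QQ$-Cartier) divisor satisfying $2L\sim B$, we have $p_*\cO_X=\cO_Y\oplus\cO_Y(-L)$. The involution splits $p_*T_X$ into invariant and anti-invariant parts:
\[
(p_*T_X)^+\cong T_Y(-\log B), \qquad (p_*T_X)^-\cong T_Y(-L),
\]
where the second identification should be checked in the reflexive sense, since we work away from codimension-two loci and $X$, $Y$ are normal. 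Since the singularities are canonical, hence rational, the relevant deformation spaces can be computed on the smooth locus and via reflexive sheaves. We therefore get
\[
H^i(X,T_X)=H^i(Y,T_Y(-\log B))\oplus H^i(Y,T_Y(-L)).
\]
For $\QQ$-Gorenstein deformations one should use $T^1_{QG}$ rather than $H^1(T_X)$. The local contribution is governed by the singularities of $Y$, namely the cone point of type II, and the $\frac14(1,1,2)$ point together with the $A_1$ curve of type III. It also depends on whether $B$ passes through them; for $B$ general in the appropriate class, $B$ avoids the isolated singular point.

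\textbf{Step 1 (invariant part).} The invariant part $H^1(T_Y(-\log B))$ together with local $T^1_{QG}(Y)$ is exactly the deformation space of the pair $(Y,B)$. By the first Theorem, proved in earlier sections for types II and III, this space is unobstructed, and the locus of such pairs has codimension one (type II) inside $\overline{\cN}_{(d,4)}$. This codimension count is DeVleming's statement that type II pairs form a divisor: the one-dimensional smoothing direction of the cone singularity is transverse.

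\textbf{Step 2 (anti-invariant part).} This is the main obstacle. We must show $H^1(Y,T_Y(-L))=0$, and that $H^2$ of both pieces vanishes or that the obstruction maps vanish. I would pull back to the toric resolution $V=\PP(\cO_T\oplus\cO_T(-2,-2))$ for type II, and to $V$ over $\FF_2$ for type III. On $V$ we use the Euler-type sequence for the toric variety,
\[
0\to\cO_V^{\oplus 2}\to\bigoplus_\rho\cO_V(D_\rho)\to T_V\to 0,
\]
twisted by $-p^*L$, and compare $p_*T_V$ with $T_Y^{\vee\vee}$. The two agree away from the exceptional locus, and the discrepancy should be computed by local cohomology at the cone point. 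The required cohomology of the line bundles $\cO_V(D_\rho-p^*L)$ vanishes by Demazure/Bott vanishing on toric varieties. Here $L$ is anti-nef in the needed sense, since $L$ is a large positive multiple of the hyperplane class and negativity is produced by the twist. For the last rays this needs $d_1$ in the right congruence class.

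\textbf{Step 3 (type III parity and conclusion).} In type III, $L=\frac{d_1}{4}$-multiple issues arise at the $\frac14(1,1,2)$ point. When $d_1$ is even, $L$ is Cartier away from the index-4 point, and there the local class is that of a $\frac12$- or index-1 twist, so the local $T^1$ of $X$ over that point is that of a quotient singularity with no $\QQ$-Gorenstein deformations. This makes the local-to-global obstruction vanish, and $H^2$ vanishing on the toric model then gives smoothness of $\cM_d^{\mathrm{can}}$ at Type III. For type II, $H^2=0$ gives unobstructedness. Combining this with Step 1, the Type II locus has codimension one: it is cut out by the single smoothing parameter of the cone point, while all anti-invariant deformations remain double covers. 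Since $\cM_d^{\mathrm{sm}}$ is smooth on the generic side, the Type II locus is a boundary divisor along which smoothness extends.
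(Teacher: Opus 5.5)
Your route (working on the canonical model $X\to Y$ with reflexive sheaves and $T^1_{QG}$) differs from the paper's. The paper works entirely on the smooth non-minimal model $p\colon X'\to Y'$ over the toric $\PP^1$-bundle $\pi\colon Y'\to T$, using $0\to\cT_{X'}\to p^*\cT_{Y'}\to p^*\cN_B\to0$.

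\textbf{The gap: the anti-invariant $H^2$ does not vanish where you need it.} Bott/Demazure vanishing cannot supply it, because it requires an ample twist. For $d_1$ even, $L=-\frac{d_1}{4}K_{Y'}=\frac{d_1}{2}\bigl(E+\pi^*(-K_T)\bigr)$ is nef and big but trivial on $E$. Concretely, for $d_1$ even in both types, twist $0\to\cT_{Y'/T}\to\cT_{Y'}\to\pi^*\cT_T\to0$ by $L^{-1}$. One finds $H^2(\cT_{Y'/T}\otimes L^{-1})=0$ and $H^2(\pi^*\cT_T\otimes L^{-1})=\bigoplus_{k=1}^{d_1/2-1}H^1(T,\cT_T\otimes\omega_T^{\otimes k})$. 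The connecting map to $H^3(\cT_{Y'/T}\otimes L^{-1})$ is cup product with $c_1(\omega_T)$ followed by the fibrewise Euler field. It kills the extreme summand $H^1(T,\cT_T\otimes\omega_T)\cong H^1(T,\Omega^1_T)^\vee\cong\CC^2$, so $H^2(\cT_{Y'}\otimes L^{-1})\neq0$.

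For type II this persists on $Y$ itself. Since $L=\tau^*L_Y$ is Cartier and $R^2\tau_*\cT_{Y'}$ has length at most $1$, Leray gives $h^2(Y,\cT_Y\otimes L_Y^{-1})\ge1$. For type III with $d_1$ odd, the paper even finds $h^1(\cT_{Y'}\otimes L^\vee)=1$, and $h^2=2$ for $d_1=5$. So ``$H^2=0$ gives unobstructedness'' is unavailable in Part~(2) and in the $d_1$ even half of Part~(1).

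\textbf{Your Step~3 local analysis is also wrong.} $\cO_{\PP^3}(1)$ degenerates to $\cO_{\PP(1,1,2,4)}(2)$, so for $d_1$ even $L=\cO_Y(2d_1)$ is Cartier everywhere. Hence $X\to Y$ is \'etale over the $\frac14(1,1,2)$ point. That point is Gorenstein and non-isolated, and it is not $\QQ$-Gorenstein rigid: it smooths along $\PP(1,1,2,4)\rightsquigarrow\PP^3$. Moreover $X$ is singular along a curve lying over the $A_1$-curve, so $H^1(X,\mathcal{T}^1_X)$ enters the obstruction space, and your sketch does not control it.

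\textbf{The missing idea: no $H^2$ vanishing is needed.} It suffices that the anti-invariant $H^1$ vanishes. This holds for type II, and for type III exactly when $d_1$ is even; that is where the parity really enters, not through local indices. Consider $H^0(p^*\cN_B)\to H^1(\cT_{X'})\to H^1(p^*\cT_{Y'})=H^1(\cT_{Y'})\oplus H^1(\cT_{Y'}\otimes L^\vee)$. The cokernel of the first map lies in $H^1(\cT_{Y'})$. This is $0$ for type II, and for type III it is $H^1(\cT_{\FF_2})$, the unobstructed direction $\FF_2\rightsquigarrow\FF_0$. So every first-order deformation of $X'$ is induced by an actual deformation of the pair $(Y',B)$, and these are unobstructed because $H^1(\cN_B)=0$. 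The deformation space is therefore smooth of dimension $h^1(\cT_{X'})$, whether or not $H^2(\cT_{X'})$ vanishes.

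This is the paper's argument: spelled out for type II, and for type III via its Proposition and the Remark identifying $H^1(\cT_{Y'})\cong H^1(\cT_{\FF_2})$. In type II, $h^1(\cT_{X'})=\binom{2d_1+3}{3}-17$ is one less than $\binom{2d_1+3}{3}-16$ for double covers of $\PP^3$, which gives the boundary divisor.
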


\medskip

\subsection*{Acknowledgements}
J. Chen is supported by National Science and Technology Council (114-2811-M-002-168). Y. Lee is supported by the Institute for Basic Science
(IBS-R032-D1). Y. Lee would like to thank the Department of Mathematics and NCTS in National Taiwan University for the hospitality during his visit in 2025 and 2026. P.-S. Soo is supported by the University of Tokyo Fellowship and would like to thank his advisor, Keiji Oguiso, and his lab members for their guidance and valuable suggestions.
The authors would like to thank Kenneth Ascher and Kristin DeVleming for some useful discussion and comments.

\section{Double cover over the degeneration of $\PP^3$ to the cone}\label{canonical}

Due to \cite[Theorem 1.6]{CHJ24}, the canonical model of smooth projective threefold of general type $W$ with $p_g(W) \ge 4$ and ${\rm Vol}(W)=2$ belongs to either of the following 2 types. 
\begin{itemize}
    \item[(a)] $W$ is a hypersurface in $\PP(1,1,1,1,5)$ defined by a weighted homogeneous polynomial $f$ of degree 10, where $f(x_0, \ldots, x_4)=x^2_4 + f_0(x_0, x_1, x_2, x_3)$ in suitable homogeneous coordinates $[x_0: \cdots : x_4]$ of $\PP(1, 1, 1, 1, 5)$.
    \item[(b)] $W$ is a subvariety in $\PP(1, 1, 1, 1, 2, 5)$ defined by 2 weighted homogeneous polynomials $q$ and $f$ of degrees 2 and 10 respectively, where
    $$q(x_0, x_1, x_2, x_3)=x^2_3 + q_0(x_0, x_1, x_2), \quad f(x_0,\ldots, x_5)=x^2_5 +f_0(x_0, \ldots,x_4)$$ 
    in suitable homogeneous coordinates $[x_0 : \cdots : x_5]$ of $\PP(1, 1, 1, 1, 2, 5)$. This is a specialization of canonical 3-folds of type I;
\end{itemize}
In each case, $W$ is simply connected. The moduli space $\cM_{{\rm Vol}=2, p_g=4}$ ($=\cM_{10}^\mathrm{can}$ in our notation) is irreducible, unirational, and ${\rm dim}~\cM_{2,4}=270$.

\medskip

In \cite{Soo24}, Soo shows that the dimension of moduli of threefolds in (b) is 269, so it provides a boundary divisor in $\cM_{10}^\mathrm{can}$. He also describes the threefolds in (b) in more detail, particularly their minimal models. We summarize the description for the convenience of the reader. Let $V=\cO_T\oplus \cO_T(-2, -2)$ and $Y'=\PP(V)$ where $T=\mathbb F_0$, and let $\pi: Y'\to T$. Let $E$ be the $\pi$-section corresponding to the quotient line bundle $V \to \cO_T(-2, -2)\to 0$. Then a general $B_0\in |5E+\pi^*(\cO_T(10, 10))|$ is a smooth divisor such that $B_0\cap E=\emptyset$ by Bertini's theorem.  We have a double covering $p: X'\to Y'$ branched over $B:=B_0+E$. Then $X'$ is a smooth threefold with $({\rm Vol}, p_g)=(2, 4)$, but not minimal. The canonical model $X$ of $X'$ is obtained by contracting $p^{-1}(E)$. Let $\tau: Y'\to Y$ be the contraction of $E$ in $Y'$. Then $X\to Y$ is a double covering branched over $\tau_*B_0$ and the isolated canonical singularity of $Y$. 
The surface $T=\mathbb F_0$ has an anticanonical embedding into $\PP^3$ as a smooth quadric and hence can be degenerated to a singular quadric $T_o$ in $\PP^3$, which can also be obtained by contracting the $(-2)$-curve in the second Hirzebruch surface $\FF_2$. Let $Y_o$ be the contraction of the section $\PP(\cO_{T_o})$ in $\PP(\cO_{T_o}\oplus \cO_{T_o}(-K_{T_o}))$. Then $Y_o$ is a degeneration of the above $Y$, and $Y_o$ has singularities along a curve. Then the double cover of $Y_o$ has canonical singularities along a curve \cite{Soo24}.

\begin{Ex}\label{type II and III}
This degeneration can be generalized to any $d_1 \ge 5$. We give a more detailed description as follows. Let $\mathcal{Y} \subset \mathbb{P}(1^4, 2) \times \mathbb{A}^2 \eqqcolon \mathcal{Z}$ be defined by $ty +\lambda x_3^3+x_2^2+x_1^2+x_0^2$, which we regard as a family over a two-parameter base $\mathbb{A}^2$ in coordinates $t$ and $\lambda$. 
Let $\mathcal{D}$ be the divisor on $\mathcal{Z}$ defined by $f(x_0,x_1,x_2,x_3)$ some general homogeneous polynomial of degree $2d_1$. 
Consider a double cover $\mathcal{X}$ along $\mathcal{Y}$ branched over $\mathcal{B} \coloneqq \mathcal{D}|_\mathcal{Y}$. More precisely, since $\mathcal{Y}$ is singular along $([0:0:0:0:1], t=0) \times \mathbb{A}^1_\lambda$, we first take the double cover of $\mathcal{Y}^\circ \coloneqq \mathcal{Y}|_{t \neq 0}$ and then compactify.  

% (\textbf{Question.} Does this make sense? At least $\mathcal{B}$ is even Cartier away from the central fiber, so maybe what we do is take the double cover on $\mathcal{Y}|_{t \neq 0}$ and then extend to $\mathcal{Y}|_{t=0}$?)
% 
\begin{itemize}
\item If $t \ne 0$, then $Y_t \cong \mathbb{P}^3$. Since $Y_t$ avoids the cone point in $\PP(1^4,2)$, $B_t \in |\cO_{Y_t}(2d_1)|$ is an even Cartier divisor and $X_t$ is a double cover of $Y_t$ branched along $B_t$. This is Type I example.

\item If $t=0$ and $\lambda \ne 0$, then $Y_\lambda$ is a quadric in $\mathbb{P}(1^4,2)$. 
In fact, since $Z:=\mathbb{P}(1^4,2)$ is a cone over $\mathbb{P}^3$, $Y_\lambda$ is a cone over a smooth quadric $Q \cong \mathbb{F}_0$ defined by $\lambda x_3^3+x_2^2+x_1^2+x_0^2$ in $\mathbb{P}^3$. 
$X_\lambda$ is a double cover of $Y_\lambda$ along $B_\lambda$.

We describe the smooth model. 
Blow up $Z$ along the singular point, we get $\phi: \tilde{Z} \to Z $ with exceptional divisor $\mathcal{E} \cong \mathbb{P}^3$, where $\tilde{Z}=\mathbb{P}_{\mathbb{P}^3}(\mathcal{O} \oplus \mathcal{O}(-2))$. 
Restricting to (preimage of) $Q$ induces 
$Y'_\lambda \cong 
\mathbb{P}_Q(\mathcal{O} \oplus \mathcal{O}(-2H_Q)) \to Y_\lambda$ 
with exceptional divisor 
$E \cong Q \cong \mathbb{F}_0$, where $H_Q = \sigma + \ell$ on $\mathbb{F}_0$. We also denote the induced map by $\phi: Y'_\lambda \to Y_\lambda$ by abuse of notation.
\[
\begin{tikzcd}
    Y'_\lambda \ar[r, hook] \ar[d, "\pi"']
    \ar[rrr, bend left, "\phi"]
    & \tilde{Z} \ar[r, "\phi"] \ar[d] & Z \ar[ld, dashed] 
    & \ar[l, hook'] Y_\lambda
    \\
    \mathbb{F}_0  \ar[r, hook, "|\sigma + \ell|"]
    & \mathbb{P}^3
\end{tikzcd}
\]
Let $H_{Y_\lambda}$ be the restriction of the (non-Cartier) hyperplane divisor $\cO_Z(1)$ onto $Y_\lambda$: it has index $2$ at the cone point. Also, $2H_{Y_\lambda}$ has multiplicity $1$ at the cone point, so we have 
\[
\begin{aligned}
    \phi^* 2H_{Y_\lambda} &= \pi^* 2H_Q + E. 
\end{aligned}
\]
In particular, we have 
\[
\phi^*B_\lambda = \phi^* 2d_1 H_{Y_\lambda}
\sim \pi^* 2d_1 H_Q + d_1 E
= d_1 E + 2d_1 \pi^*(\sigma + \ell).
\]
We round this up to an even divisor by adding an extra $E$ to take the double cover $X'_\lambda$ of $Y'_\lambda$ branched along $(d_1 + 1)E + 2d_1 \pi^*(\sigma + \ell)$. we have the following commutative diagram:
\[
\begin{tikzcd}
    X'_{t=0, \lambda} \ar[r] \ar[d, "2:1"] 
    & X_{t=0, \lambda} \ar[d, "2:1"] \\
    Y'_{t=0, \lambda} \ar[r] & Y_{t=0, \lambda}
\end{tikzcd}
\]
But we note that there is no deformation from $X'_{t=0, \lambda}$ to $X_{t, \lambda}$. This is Type II example.

\item If $t=0$ and $\lambda = 0$, then $Y_0$ is a singular quadric in $\mathbb{P}(1^4,2)$. In fact, since $Z:=\mathbb{P}(1^4,2)$ is a cone over $\mathbb{P}^3$, $Y_0$ is a cone over a singular quadric $Q \cong \overline{\mathbb{F}}_2$ defined by $x_2^2+x_1^2+x_0^2$ in $\PP^3$. $X_0$ is a double cover of $Y_0$ along $B_0$.

Blow up $Z$ as in II and restrict to $\overline{\mathbb{F}}_2$. 
We get $\tilde{Y}_0 \cong \mathbb{P}_Q(\mathcal{O} \oplus \mathcal{O}(-2H_Q)) \to Y_0$ with exceptional divisor 
$E \cong Q \cong \overline{\mathbb{F}}_2$.
Further resolve $Q$ by blowing up the singular point. This blowup can be identified with the map $\mathbb{F}_2 \to \overline{\mathbb{F}}_2$ given by the linear system $\lvert {\sigma + 2\ell} \rvert$. After base change (normalization of $\tilde Y_0\times_{\overline{\mathbb F}_2}\mathbb F_2$), we get a projective bundle $Y_0' \cong \mathbb{P}_{\mathbb{F}_2}
(\mathcal{O} \oplus \mathcal{O}(-2H_{\mathbb{F}_2}))$, where $H_{\mathbb{F}_2} = \sigma + 2\ell$.

% Blowup $Z$ along the singular point, we get $\tilde{Z} \to Z $ with exceptional divisor $\mathcal{E} \cong \mathbb{P}^3$, where $\tilde{Z}=\mathbb{P}(\mathcal{O} \oplus \mathcal{O}(-2))$. It induces 
% $\tilde{Y}_0 \cong \mathbb{P}_Q(\mathcal{O} \oplus \mathcal{O}(-2H)) \to Y_0$ with exceptional divisor 
% $E \cong \bar Q \cong {\mathbb{F}}_2$, where $H$ corresponds to $\sigma+2l$ on $\mathbb{F}_2$. We have $D \sim \pi^* 5H$, where $\pi: \tilde{Y}_0 \to \bar Q$ is the natural map.
% This is the Type III example.
\[
\begin{tikzcd}
    Y'_0 \ar[r] \ar[d, "\pi"']
    & \tilde{Y}_0 \ar[r, hook] \ar[d]
    \ar[rrr, bend left, "\phi"]
    & \tilde{Z} \ar[r, "\phi"] \ar[d] & Z \ar[ld, dashed] 
    & \ar[l, hook'] Y_0
    \\
    \mathbb{F}_2 \ar[r, "|\sigma + 2\ell|"]
    & \overline{\mathbb{F}}_2  \ar[r, hook]
    & \mathbb{P}^3
\end{tikzcd}
\]
Similar to Type II, we take the double cover $X'_0$ of $Y'_0$ branched along $(d_1 + 1)E + 2d_1 \pi^*(\sigma + 2\ell)$. We also have the following commutative diagram:
\[
\begin{tikzcd}
    X'_{t=0, \lambda=0} \ar[r] \ar[d, "2:1"] 
    & X_{t=0, \lambda=0} \ar[d, "2:1"] \\
    Y'_{t=0, \lambda=0} \ar[r] & Y_{t=0, \lambda=0}
\end{tikzcd}
\]
And we note that there is a deformation from $X'_{t=0, \lambda=0}$ to $X'_{t=0, \lambda}$ via deformation from $\mathbb F_2$ to $\PP^1\times \PP^1$.
This is Type III example. % (\textbf{Details to be checked.})
\end{itemize}
\end{Ex}

\begin{Rmk} Observe that $Y_0$ is isomorphic to $\PP(1,1,2,4)$, which appears on the list in \cite{ADL23}. Indeed, consider the map $\PP(1,1,2,4) \to \PP(1^4,2) = Z$ given by the  embedding $[u_0: u_1: v: w] \mapsto [u_0^2: u_0 u_1: u_1^2: v: w]$. Then the image is defined by $x_0 x_2 - x_1^2$ in $Z$, which becomes the defining equation of $Q$ (and $Y_0$) after a coordinate change.
\end{Rmk}

The next proposition shows that if $Y$ is a $\QQ$-Gorenstein degeneration of $\PP^3$ and a cone over a Hirzebruch surface $\mathbb{F}_e$ or a rational normal scroll $\overline{\mathbb{F}}_e \cong \PP(1,1,e)$ then $Y$ should be the type II or the type III without assuming canonical singularities.

\begin{Prop}\label{cone}
Let $T=\mathbb F_e$ and $V=\PP(\cO_T\oplus \cO_T(-D))$ for some divisor $D$ on $T$. Let $p: V\to T$ the projection with $\Sigma$ the section corresponding to $\cO_T(-D)$.

\begin{enumerate}
    \item Suppose $D$ is ample, and let $f: V\to W$ be the morphism contracting $\Sigma$ to the cone $W$ so that $W$ is a $\QQ$-Gorenstein degeneration of $\PP^3$. Then $W$ is the type II.

    \item Suppose $D$ is nef and big but not ample, and let $f: V\to W$ be the composition of $f': V\to W'$ contracting $p^*\sigma$ and $\tau: W' \to W$ contracting $f'(\Sigma)$ to the cone $W$ so that $W$ is a $\QQ$-Gorenstein degeneration of $\PP^3$. Then $W$ is the type III.
\end{enumerate}
\end{Prop}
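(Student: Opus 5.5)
Since $W$ is a $\QQ$-Gorenstein degeneration of $\PP^3$, the plan is to extract numerical constraints and then pin down $e$ and $D$. The constraints we use are that $W$ is $\QQ$-Fano with Picard number one, $(-K_W)^3=64$, and the degeneration $L$ of $\cO_{\PP^3}(1)$ is a Weil divisor with $-K_W\sim_\QQ 4L$. On $V$, let $\xi$ be the tautological class with $\xi|_\Sigma$ trivial on the other section; equivalently write $\Sigma_0\sim \Sigma+p^*D$ for the section disjoint from $\Sigma$. Then $\Sigma|_\Sigma=-D$ and
\[
K_V=-2\Sigma_0+p^*(K_T+D)=-2\Sigma-p^*D+p^*K_T .
\]
For a cone contraction $f$ we have $f^*K_W=K_V-a\Sigma$, and $a$ is found by requiring $(K_V-a\Sigma)|_\Sigma\equiv 0$. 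This gives $K_T-D+aD\equiv 0$ in $T$, so $-K_T\equiv (a-1)D$. Thus $-K_T$ must be numerically proportional to $D$.

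For case (1), $D$ is ample on $\FF_e$. Proportionality with $-K_T=2\sigma+(e+2)\ell$ forces $D\equiv c(2\sigma+(e+2)\ell)$ for some rational $c>0$. Ampleness of $-K_T$ then requires $e\le1$; we reject $e=1$ separately, or simply keep it as a case to eliminate numerically. Write $D=c(-K_T)$, so that $-f^*K_W=(1+1/c)\,\Sigma_0$ up to pullbacks. Next compute
\[
(-K_W)^3=(-f^*K_W)^3=\frac{(1+c)^3}{c^3}\,\Sigma_0^3=\frac{(1+c)^3}{c^3}\,D^2=\frac{(1+c)^3}{c}\,K_T^2 .
\]
Imposing $(-K_W)^3=64$ with $K_T^2=8$ gives $(1+c)^3=8c$, whose admissible solution is $c=1$. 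Integrality matters here: $D=c(-K_T)$ must be an integral class, and for $e=0$ we need $D=(2c,2c)$ with $2c\in\ZZ$, which we check against the finitely many roots. For $e=1$ we get $K_T^2=8$ as well, but $-K_T=2\sigma+3\ell$ is primitive, so $c\in\ZZ$ and again $c=1$. We then exclude $e=1$ by showing that $L$ cannot exist. Any Weil divisor on $W$ is $f_*$ of $m\Sigma_0+p^*N$, and $4L\sim -K_W$ would need $-K_T=2\sigma+3\ell$ to be divisible appropriately (by $2$ in the relevant sense), which fails. For $e=0$ this yields $D=\cO(2,2)$, which is type II.

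For case (2), $D$ is nef and big but not ample, so $e\ge1$ and $D=\alpha(\sigma+e\ell)$ up to numerical equivalence. Proportionality with $-K_T$ forces $-K_T$ to be nef, not ample, and proportional to $\sigma+e\ell$, which requires $e=2$ and $-K_T=2(\sigma+2\ell)$. We repeat the same volume computation with $K_T^2=8$. This gives $D=-K_T=2\sigma+4\ell$ (the case $c=1$), which matches the description of type III. We also check that the two-step contraction $f'$, $\tau$ produces the cone $\PP(1,1,2,4)$. The main obstacle is the integrality and divisibility bookkeeping: justifying that the cubic equation has only $c=1$ as an admissible root given integrality, and ruling out $e=1$ in case (1) using the existence of $L$ with $4L\sim_\QQ -K_W$ and the index condition. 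If the divisibility argument is delicate, the fallback is to compare $h^0(-K_W)=35$ (or Hilbert polynomials, which are preserved under flat degeneration) with $h^0$ on $V$ computed via $p_*$.
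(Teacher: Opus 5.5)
\paragraph{The gap: case (2).} You derived the proportionality $-K_T\equiv\lambda D$ from the ansatz $f^*K_W=K_V-a\Sigma$. That ansatz holds only when $\Sigma$ is the sole $f$-exceptional divisor. In case (2), $f$ also contracts the divisor $p^{-1}(\sigma)$ to a curve, so you must write $K_V=f^*K_W+a_1\Sigma+a_2\,p^*\sigma$, as the paper does.

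Restricting to $\Sigma\cong T$, where $\Sigma|_\Sigma=-D$ and $K_V|_\Sigma=K_T+D$, gives $-K_T\equiv(1+a_1)D-a_2\sigma$. For $D=a(\sigma+e\ell)$ this is solvable for every $e\ge1$ and $a\ge1$, with $1+a_1=\frac{e+2}{ae}$ and $a_2=\frac{2-e}{e}$. Equivalently, $W$ is the cone over $\PP(1,1,e)$, which has Picard number one, so $W$ is automatically $\QQ$-Gorenstein. Hence nothing forces $-K_T$ to be proportional to $D$, and nothing forces $e=2$. Your computation is valid only at $e=2$, where $a_2=0$ because the $A_1$ point of $\PP(1,1,2)$ is crepant.

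The input $K_T^2=8$ is also wrong for $e\neq2$. One finds $-f^*K_W=(2+a_1)\Sigma_0$, hence $(-K_W)^3=(ae+e+2)^3/(ae^2)$; this is the cone formula with $K_{\PP(1,1,e)}^2=(e+2)^2/e$. The actual content of (2) is solving $(ae+e+2)^3=64ae^2$ in positive integers.

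\paragraph{What that equation shows.} Part (2) is not true as stated. Besides $(e,a)=(2,2)$, which is type III, there is the solution $(e,a)=(1,1)$. There $D=\sigma+\ell$ is the pullback of a line under $\mathbb{F}_1\to\PP^2$, $W'=\mathrm{Bl}_{pt}\PP^3$, and $W\cong\PP^3$, which is type I. For $e\ge8$ there are no solutions, since $(ae+e+2)^3>e^3(a+1)^3\ge 8ae^3\ge 64ae^2$; the range $3\le e\le 7$ is a finite check. So (2) needs either $e\ge2$ as a hypothesis or the conclusion ``type I or type III''. The paper's own case (2) formula misses this too: it reuses the discrepancy $(2-a)/a$ from case (1) and ignores $p^*\sigma\cdot\bar\ell=1$. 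Its formula is therefore valid only for $e=2$; at $(a,e)=(1,1)$ it returns $63$ instead of $64$.

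\paragraph{Case (1) is sound and cleaner than the paper's.} There is a sign slip: restricting $K_V-a\Sigma$ to $\Sigma$ gives $K_T+(1+a)D\equiv0$, not $K_T+(a-1)D\equiv0$. Your subsequent identity $-f^*K_W=(1+1/c)\Sigma_0$ is nevertheless exact. The paper fixes the discrepancy using only $\Sigma\cdot\bar\ell$ and then analyzes a three-parameter volume formula. You instead impose the full condition $f^*K_W|_\Sigma\equiv0$, which reduces everything to $(1+c)^3=8c$; its other roots $-2\pm\sqrt5$ are irrational. Your index argument is genuinely needed. The cone over $(\mathbb{F}_1,-K_{\mathbb{F}_1})$ has $(-K_W)^3=64$; the paper's case (1) formula also gives $64$ at $(a,b,e)=(2,3,1)$, contrary to its claim. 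That cone is excluded only because $\mathrm{Cl}(W)\cong\mathrm{Pic}(\mathbb{F}_1)$ is torsion free, and $-K_W$, which corresponds to $-2K_{\mathbb{F}_1}=4\sigma+6\ell$, is not divisible by $4$.
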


\begin{proof}
Let $D=a\si+b\ell$ where $\si$ is the negative section and $\ell$ is a fiber. Since $D$ is nef and big, $a >0$ and $b\ge ae$ for $\si^2=-e$ if $e > 0$. If $e=0$, then $a, b> 0$. If $D$ is not ample, then $a> 0$ and $b=ae$.
By the canonical bundle formula for a projective bundle, 
$$K_V=-2\Si-p^*((a+2)\si+(b+e+2)\ell).$$

Write $K_V=f^*K_W+c\Si+dp^*\si$. Let $\bar\ell:=p^*\ell|_\Si$. Then 
$$-2=K_\Si \cdot \bar\ell=(K_V+\Si) \cdot \bar\ell=-\Si \cdot \bar\ell-(a+2).$$
So $\Si \cdot \bar\ell=-a$, and $c=\frac{2-a}{a}$. We also get $d=\frac{2-e}{e}$ (resp. $d=0$) if $D$ is not ample (resp. ample).

We first treat the case when $D$ is ample. Then $d=0$, and we have 
$$f^*K_W = -\frac{a+2}{a}\Si-p^*((a+2)\si+(b+e+2)\ell).$$
Since we assume that $W$ is a $\QQ$-Gorenstein degeneration of $\PP^3$, $(-K_W)^3=64$. Let $A=p^*((a+2)\si+(b+e+2)\ell)$. Then we have
\begin{itemize}
    \item $\Si^3=(-a\si-b\ell)^2=-a^2e+2ab$,
    \item $\Si^2A=(-a\si-b\ell)((a+2)\si+(b+e+2)\ell)=(a^2+2a)e+(-2ab-ae-2a-2b)$,
    \item $\Si A^2=((a+2)\si+(b+e+2)\ell)^2=-(a^2+4a+4)e+(2ab+2ae+4a+4b+4e+8)$.
\end{itemize}
Therefore, 
\[ 
\begin{aligned} 
-K_W^3&= \prbra*{\frac{a+2}{a}}^3(-a^2e+2ab)
+ 3 \prbra*{\frac{a+2}{a}}^2((a^2+2a)e+(-2ab-ae-2a-2b)) \\
&\phantom{=} +3 \prbra*{\frac{a+2}{a}} (-(a^2+4a+4)e+(2ab+2ae+4a+4b+4e+8))\\
&=\frac{e}{a}(-(a+2)^3+3(a+2)^2)+\frac{2b}{a^2}((a+2)^3-3(a+2)^2)+\frac{6}{a}(a+2)^2 \\
&=\frac{(a+2)^2}{a^2}(-a^2e+2ab+ae-2b+6a).
% & &\eqqcolon(*).
\end{aligned} 
\]
Computation shows that if $a \le 2$, then $-K_W^3=64$ only when $a=b=2$, which is the type II.

Suppose that $e=0$. We may assume that $a \ge b$.
It is easy to verify that if $b \ge 3$ then $-K_W^3 >64$.  If $b=2$, then it can checked directly $-K_W^3=\frac{(a+2)^2}{a^2}(10a-4)> 64$ for all $a\ge 3$. Moreover, if $b=1$, then it can be checked again directly that $-K_W^3=\frac{(a+2)^2}{a^2}(8a-2)> 64$ for all $a\ge 4$, and $-K_W^3 < 64$ for $a=3, b=1$.

Suppose $e > 0$. Then we have
$$-K_W^3 \ge \frac{e}{a}((a+2)^3-3(a+2)^2)+\frac{6}{a}(a+2)^2$$
since $b\ge ae$. The above equality holds when $b=ae$, and we have again $-K_W^3>64$ if $a\ge 3$.

We next treat the case when $D$ is nef and big but not ample. In this case $a>0$, $e>0$ and $b=ae$.
We have 
$$f^*K_W = -\frac{a+2}{a}\Si-p^*((a+2)\si+(b+e+2)\ell) - \frac{2-e}{e} p^*\si.$$
Computation shows that 
$$-K_W^3=\frac{a+2}{ae}(a^2e^2+ae^2+e^2+6ae+12).$$
It is straightforward to verify that $-K_W^3=64$ only when $a=2$, $b=4$, and $e=2$, i.e., $W$ is of type III. 
\end{proof}

\section{Boundary divisors from the type II and moduli of double cover}\label{sec:Type_II}
The purpose of this section is to describe the pair of the type II degeneration of $\PP^3$ and hypersurfaces. Such study appeared in Horikawa's work on quintic surfaces already (cf. \cite{Hor75}).  
In \cite{Hor75}, Horikawa determines the structures of minimal algebraic surfaces $S$ with $p_g=4, q=0$, and $K^2=5$. There are three types of such surfaces. A surface of type I is birationally equivalent to a quintic surface in $\PP^3$ which has at most rational double points as its singularities. A surface of type IIa or of type IIb has a birational model which is a ramified double covering of a rational surface. The classes of surfaces of type I and of type IIa are, respectively,
closed under small deformations, and surfaces of type IIb are in the intersection of moduli of surfaces of type I and of type IIa. Surfaces of type IIb give a codimension one locus in the moduli of quintic surfaces. For surfaces of type I, the canonical bundle $K_S=\cO(1)$ is ample and has no base point. But for surfaces of type IIa (respectively IIb), $|K_S|$ has a one base point and it induces a double cover over $\PP^1\times\PP^1$ (respectively over a quadric cone).

\begin{Rmk} \label{odd in TYPE II}
DeVleming \cite[Section 5]{DeV22} describes one boundary divisor of the KSBA moduli of quintic surfaces using the type II degeneration of $\PP^3$. Let $W$ be a section of $\cO_Z(2)$ for $Z=\PP(1,1,1,1,2)$. Then a degenerating surface $B$ of quintics has a unique $\frac{1}{4}(1, 1)$ singularity at the vertex of $W$. 

This argument can be generalized to a degeneration of the pair $(\PP^3, \cO_{\PP^3}(d))$ for a positive odd integer with $d\ge 5$, i.e., the type II degeneration of $\PP^3$ produces a boundary divisor of the KSBA moduli of hypersurfaces, and a degenerating surface $B$ of hypersurfaces has a unique $\frac{1}{4}(1, 1)$ singularity at the vertex of $W$ \cite[Proposition 4.43]{DeV22}.
\end{Rmk}

\begin{Prop}\label{surface in type II} Let $Y$ be a type II degeneration of $\PP^3$. Consider a general element $(Y, B_{d, Y})$ of degeneration of the pairs $(\PP^3, B_d)$ where $B_d\in |\cO_{\PP^3}(d)|$.
\begin{itemize}
    \item If $d$ is odd with $d\ge 5$, then $B_{d, Y}$ has a unique $\frac{1}{4}(1, 1)$ singularity at the vertex of $Y$.
    \item If $d$ is even with $d\ge 4$, then $B_{d, Y}$ is a smooth complete intersection of multidegrees $(2, \frac{d}{2})$ in the weighted projective surface $\PP(1,1,1,1,2)$.
\end{itemize}
\end{Prop}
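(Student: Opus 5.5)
The plan is to realize the type II threefold concretely, as in Example~\ref{type II and III}, and to read off $B_{d,Y}$ as a divisor on it. Write $Y = Y_\lambda \subset Z = \PP(1,1,1,1,2)$ as the quadric cone $\{q(x_0,\dots,x_3)=0\}$ with $q$ of rank $4$, where $y$ denotes the weight-$2$ coordinate. The degeneration $\mathcal Y=\{ty+q=0\}\subset Z\times\Delta$ identifies the degeneration of $\cO_{\PP^3}(1)$ with $L=\cO_Z(1)|_Y$. This is a Weil divisor of Cartier index $2$ at the vertex $P=[0:0:0:0:1]$.

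In the family, a degree $d$ form on the general fiber $\PP^3$ extends to a weighted degree $d$ section of $\cO_Z(d)$ restricted to $\mathcal Y$. Since $y=-q/t$ on the general fiber, every such limit has the form $g_d(x)+y\,g_{d-2}(x)+y^2g_{d-4}(x)+\cdots$ modulo $q$. I will therefore take $B_{d,Y}=Y\cap\{F=0\}$ with $F\in H^0(Z,\cO_Z(d))$ general. Conversely, the conditions of $\overline{\cN}_{(d,4)}$ force $B_{d,Y}\sim dL$, using $K_Y=-4L$, and $H^0(Y,\cO_Y(d))$ is the restriction of $H^0(Z,\cO_Z(d))$ because $Y$ is a hypersurface in $Z$. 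So a general element of the limiting locus is a general member of $|\cO_Y(d)|$.

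\emph{Case $d$ even, $d=2d_1$.} The monomial $y^{d_1}$ has degree $d$, so a general $F$ does not vanish at $P$. Hence $B_{d,Y}$ misses the only singular point of $Z$ and of $Y$. Moreover $\cO_Z(d)$ is Cartier and base point free on $Z$, so Bertini makes $B_{d,Y}=\{q=0,\ F=0\}$ smooth away from $P$, and it avoids $P$. This exhibits $B_{d,Y}$ as a smooth complete intersection of degrees $(2,d_1)$ in $\PP(1,1,1,1,2)$. I also want it to be a genuine limit of the smooth $B_d$: in the family, take $\mathcal B=\{F=0\}\cap\mathcal Y$ with general fiber $F(x,-q/t)$. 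This is a degree-$d$ surface in $\PP^3$, smooth for general data, so the pair does lie in the degeneration. Generality is used only through Bertini and the open condition $F(P)\neq0$.

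\emph{Case $d$ odd.} Every monomial of odd weight contains some $x_i$, so $F(P)=0$ and $P\in B_{d,Y}$. Away from $P$ the previous Bertini argument gives smoothness, so it remains to identify the singularity at $P$. Dehomogenize with $y=1$: the chart near $P$ is $\mathbb A^4_x/\pm1$. In it, $Y$ is the cone $\{q=0\}/\pm1$ over the quadric surface, and $B_{d,Y}$ is cut out by $F(x,1)$. For general $F$ the linear part $\ell(x)=$ coefficient of $y^{(d-1)/2}$ is a nonzero linear form. On the double cover $\{q=0\}\subset\mathbb A^4$, the surface $\{q=\ell+\text{h.o.t.}=0\}$ is locally a smooth-in-the-ambient-hyperplane slice: it is a quadric cone in $\mathbb A^3$ with a generic hyperplane section, which is an $A_1$ point, $\{uv=w^2\}$. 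Passing to the quotient by $-1$, which acts freely off the origin, gives $\mathbb A^2/\mu_4$ acting with weights $(1,1)$. Concretely, the $A_1$ singularity is $\mathbb A^2/\pm1$, and its further $\pm1$ quotient lifts to $\mu_4$ acting by $i$. So the singularity is $\frac14(1,1)$; this is the computation in \cite[Proposition 4.43]{DeV22}, recalled in Remark~\ref{odd in TYPE II}. Uniqueness holds because $P$ is the only singular point of $Y$ and $B_{d,Y}$ is smooth elsewhere.

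The step I expect to be the main obstacle is the local identification at the vertex in the odd case. Specifically, I must check that the $\mu_2$ action on the $A_1$ slice combines into a cyclic $\mu_4$ quotient of type $\frac14(1,1)$, rather than into $\frac14(1,3)$ or a non-cyclic quotient. I would check this by parametrizing the slice $q=\ell=0$ by $(s,u)\mapsto(s^2,su,u^2)$ and tracking the action. Under that parametrization $x\mapsto -x$ corresponds to $(s,u)\mapsto(is,iu)$.
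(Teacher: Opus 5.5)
Your proof is correct and follows the paper's argument. In the even case the paper likewise notes that a general weighted-degree-$d$ form misses the vertex of $\PP(1,1,1,1,2)$, and exhibits the limit through the family $\{tw-\sigma=0,\ p=0\}$, eliminating $w=\sigma/t$ for $t\neq 0$. In the odd case the paper only cites \cite[Proposition 4.43]{DeV22}, whereas you carry out the local $\frac14(1,1)$ computation yourself, and your check that the lift is $(s,u)\mapsto(is,iu)$ rather than $(is,-iu)$ is what rules out an $A_3$ point.

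One small point, shared with the paper: $F$ has weighted degree $d$, so what both you and the paper actually exhibit is a complete intersection of weighted degrees $(2,d)$. The ``$d/2$'' in the statement is only the degree of $F$ in the weight-$2$ variable.
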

\begin{proof}
If $d$ is odd, then Remark~\ref{odd in TYPE II} gives the proof. If $d$ is even with $d\ge 4$, then a degenerating surface $B_{Y, d}$ is smooth because a general degree $d$ equation does not contain the vertex of $\PP(1,1,1,1,2)$. Let $[x_0:x_1: x_2: x_3: w]$ be a homogeneous coordinate of $\PP(1,1,1,1,2)$. We may assume that $Y$ is a cone over a smooth quadric surface in $\PP^3$ defined by $\sigma(x_0,x_1,x_2,x_3)$.  Then $B_{d, Y}$ is defined by 
\[ \left\{\begin{array}{l}
    \sigma(x_0, \ldots, x_3)=0; \\
    p(x_0, \ldots, x_3, w)=w^m+\sum_{i=1}^mw^{m-i}a_{2i}(x_0,\ldots, x_3)=0,
\end{array} \right.\]
where $d=2m$ and $a_{2i}$ is a general degree $2i$ equation in $\PP^3$ for $i=1,\ldots, m$.    

We can deform $B_{d, Y}$ by adding a constant times the variable $w$
\[ \left\{\begin{array}{l}
    tw-\sigma(x_0, \ldots, x_3)=0; \\
    p(x_0, \ldots, x_3, w)=0,
\end{array} \right.\]
where $t\in \CC$. For $t=0$ we have $B_{d, Y}$, whereas for $t\ne 0$ we can eliminate $w$ and obtain a hypersurface of degree $d$ in $\PP^3$ with the equation $p(x_0, \ldots, x_3, \sigma/t)=0$ (cf. \cite[Section 1]{CL10}).
\end{proof}

In the remaining part of this section, we generalize Soo's result for $d=10$ (cf. \cite{Soo24}) to any even integer $d\ge 10$. If $d=2d_1$ for odd $d_1$ then the construction and the computation are basically the same as Soo's results in \cite{Soo24}.

Let $V=\cO_T\oplus \cO_T(-2, -2)$ and $Y'=\PP(V)$ where $T=\PP^1\times\PP^1$, and let $\pi: Y'\to T$. Let $E$ be the $\pi$-section corresponding to the quotient line bundle $V \to \cO_T(-2, -2)\to 0$. Then there exists a smooth divisor $B_0\in |d_1E+\pi^*(\cO_T(2d_1, 2d_1))|$ such that $B_0\cap E=\emptyset$. Then, we have a double covering $p: X'\to Y'$ branched over $B:=B_0+E$ (resp. $B=B_0$) if $d_1$ is odd (resp. even). Then $X'$ is a smooth threefold with $({\rm Vol}, p_g)=(2(d_1-4)^3, \frac{(d_1-1)(d_1-2)(d_1-3)}{6})$, but not minimal. The canonical model $X$ of $X'$ is obtained by contraction of $p^{-1}(E)$. Let $\tau: Y'\to Y$ be the contraction of $E$ in $Y'$, then we have a double covering $X\to Y$. Here, if $d_1$ is odd, then the branch locus is the union of $\tau_*B_0$ and the isolated canonical singularity of $Y$; if $d_1$ is even, then the branch locus is $\tau_*B_0$.

\begin{Rmk}
\begin{enumerate}
    \item Since $Y$ is a canonical degeneration of $\PP^3$, $K_{\PP^3}=\cO_{\PP^3}(-4)$, and $K_{Y'}=-2E+\pi^*(\cO_T(-4, -4))$, we have $B_0=\frac{d_1}{2}(-K_{Y'})$, and 
    $\cO_{\PP^3}(2d_1)$ degenerates to $\tau_*B_0$. Therefore $X$ is a canonical degeneration of the double cover of $\PP^3$ branched over a smooth divisor in $|\cO_{\PP^3}(2d_1)|$.

    \item If $d_1$ is even, then the branch divisor $B$ does not have the component $E$. Then $X\to Y$ is a double covering branched over $\tau_*B_0$ and $X\to Y$ is \'etale over the isolated canonical singularity of $Y$.
\end{enumerate}
\end{Rmk}

The exact sequence of the tangent sheaves of the double cover gives
\[
\begin{tikzcd}
0\ar[r] &
\cT_{X'} \ar[r] &
p^*\cT_{Y'} \ar[r] &
\cT_{X'/Y'} \ar[r] & 0.
\end{tikzcd}
\]
From this, we can compute the cohomology spaces of the tangent sheaf $\cT_{X'}$.

\textit{Step 1.} Since $X'$ is of general type, $h^0(X', \cT_{X'})=0$.

\textit{Step 2.}
By \cite[Lemma 10]{Hor75}, we have $\cT_{X'/Y'} \cong p^*\cN_B$, where $\cN_B \cong \iota_* (\cO_Y(B)|_B)$ is the normal sheaf of the branch locus $B$ and $\iota: B \hookrightarrow Y'$ is the closed immersion. If $d_1$ is odd, then we have
\[\begin{array}{ll}
h^i(Y', \cO_{Y'}(B)) & = h^i(Y', \cO_{Y'}(d_1+1)\otimes \pi^*\cO_T(2d_1, 2d_1))\\
 & = h^i(T, {\rm Sym}^{d_1+1}(\cO_T\oplus\cO_T(-2, -2))\otimes \cO_T(2d_1, 2d_1))\\
& = h^i(T, \cO_T(-2, -2)\oplus \cO_T\oplus \cO_T(2, 2)\oplus\cdots\oplus \cO_T(2d_1, 2d_1))\\
 & = \begin{cases} \text{$1^2+3^2+\cdots + (2d_1+1)^2$, if $i=0$;}\\
\text{0, if $i=1$;}\\
\text{1, if $i=2$.}
\end{cases}
\end{array}\]
If $d_1$ is even then
\[\begin{array}{ll}
h^i(Y', \cO_{Y'}(B)) & = h^i(Y', \cO_{Y'}(d_1)\otimes \pi^*\cO_T(2d_1, 2d_1))\\
 & = h^i(T, {\rm Sym}^{d_1}(\cO_T\oplus\cO_T(-2, -2))\otimes \cO_T(2d_1, 2d_1))\\
& = h^i(T, \cO_T\oplus \cO_T(2, 2)\oplus\cdots\oplus \cO_T(2d_1, 2d_1))\\
 & = \begin{cases} \text{$1^2+3^2+\cdots + (2d_1+1)^2$, if $i=0$;}\\
\text{0, if $i=1$ or $i=2$.}
\end{cases}
\end{array}\]
Therefore, we have the following computation for $d_1$ being odd.
\[
\begin{tikzcd}
    & 0 \ar[r] & \cO_{Y'} \ar[r] & \cO_{Y'}(B) \ar[r] & \iota_* \iota^* \cO_{Y'}(B) \cong \cN_B \ar[r] & 0 \\[-4ex]
    h^0: &  & 1 & \sum_{m=0}^{d_1}(2m+1)^2 & \sum_{m=1}^{d_1}(2m+1)^2 \\[-4ex]
    h^1: &   & 0 &  0  & 0  \\[-4ex]
    h^2: &   & 0 &  1  & 1  \\[-4ex]
    h^3: &   & 0 &  0  &0   &  .
\end{tikzcd}
\]
We also have the following computation for $d_1$ being even.
\[
\begin{tikzcd}
    & 0 \ar[r] & \cO_{Y'} \ar[r] & \cO_{Y'}(B) \ar[r] & \iota_* \iota^* \cO_{Y'}(B) \cong \cN_B \ar[r] & 0 \\[-4ex]
    h^0: &  & 1 & \sum_{m=0}^{d_1}(2m+1)^2 & \sum_{m=1}^{d_1}(2m+1)^2 \\[-4ex]
    h^1: &   & 0 &  0  & 0  \\[-4ex]
    h^2: &   & 0 &  0  & 0  \\[-4ex]
    h^3: &   & 0 &  0  & 0  &  .
\end{tikzcd}
\]

\textit{Step 3.} We have $h^i(X', p^*\cT_{Y'})=h^i(Y', \cT_{Y'})+h^i(Y', \cT_{Y'} \otimes L^\vee)$ where $2L\sim B$. 

\textit{Step 3a (Computation of the cohomology of $\cT_{Y'}$).} By the description in \cite{Soo24}, $Y'$ is a complete toric variety whose fan has 6 rays. Their generators are described in the following table.

\begin{center}
    \begin{tabular}{c||c|c|c|c|c|c}
        generator of $\rho$ &
        $(1,0,2)$ & $(0,1,2)$ & 
        $(-1,0,0)$ & $(0,-1,0)$ &
        $(0,0,1)$ & $(0,0,-1)$ \\ \hline
        divisor class $[D_\rho]$ & 
        $\Sigma_0$ & $\Sigma_1$ &
        $\Sigma_0$ & $\Sigma_1$ &
        $E$ & $E + 2\Sigma_1 + 2\Sigma_0$ 
    \end{tabular}
\end{center}
Here, $\Sigma_0$ and $\Sigma_1$ are the pull back of $\cO_{\PP^1}(1)$ coming from two projections of $T\to\PP^1$. 

The toric Euler sequence then yields
\[
\begin{tikzcd}
    & 0 \ar[r] & 
    \cO_{Y'}^{\oplus 3} \ar[r] & 
    \displaystyle\bigoplus_{\rho \in \Sigma(1)} \cO_{Y'}(D_\rho) \ar[r] & 
    \cT_{Y'} \ar[r] & 0 \\[-4ex]
    h^0: &  & 3 & 19 & {16} \\[-4ex]
    h^1: &   & 0 &  0 & { 0} \\[-4ex]
    h^2: &   & 0 &  1 & { 1} \\[-4ex]
    h^3: &   & 0 &  0 & { 0} &  .
\end{tikzcd}
\]

\textit{Step 3b (Computation of the cohomology of $\cT_{Y'}\otimes L^\vee$).} On the other hand, the cohomologies $h^i(Y', \cT_{Y'}\otimes L^\vee)$ can be computed from the following exact sequence tensoring with $L^\vee$
\[ 
\begin{tikzcd}
    0 \ar[r]
    & \Theta_{Y'/T} \ar[r]
    & \cT_{Y'} \ar[r] 
    & \pi^* \cT_T \ar[r] & 0,
\end{tikzcd}
\] 
where $\pi^* \cT_T$ can be computed by pulling back the following exact sequence by $\pi$ and then tensoring with $L^\vee$: 
\[ 
\begin{tikzcd}
    0 \ar[r]
    & \Theta_{T/\PP^1} \ar[r]
    & \cT_{T} \ar[r] 
    & \mathrm{pr}^*\cT_{\PP^1} \ar[r] & 0.
\end{tikzcd}
\]
Notice that we have
$\Theta_{Y'/T} \cong \cO_{Y'}(2E + 2\Sigma_1 + 2\Sigma_0) = \cO_{Y'}(2) \otimes \pi^* \cO_T(2,2)$ and $\Theta_{T/\PP^1} \cong \cO_T(2,0)$ by considering the first Chern classes. Routine computations and the above exact sequences then yield
\[
h^i(Y', \Theta_{Y'/T}\otimes L^\vee) = 
\left\{
\begin{aligned}
    &\frac{(d_1 - 2)(d_1 - 3)(d_1 - 5)}{6},& &i = 3; \\
    &0, & &\text{otherwise.}
\end{aligned}
\right.
\] 
and 
\[
h^i(Y', \pi^*\cT_{T}\otimes L^\vee) = 
\left\{
\begin{aligned}
    &\frac{(d_1 - 2)(d_1 - 3)(d_1 - 5)}{3},& &i = 3; \\
    &0, & &\text{otherwise.}
\end{aligned}
\right.
\]
Thus, 
\[
h^i(Y', \cT_{Y'}\otimes L^\vee) = 
\left\{
\begin{aligned}
    &\frac{(d_1 - 2)(d_1 - 3)(d_1 - 5)}{2}& &i = 3; \\
    &0, & &\text{otherwise.}
\end{aligned}
\right.
\]

In conclusion, if $d_1$ is odd then we have the following table.
\[
\begin{tikzcd}
&[-3em] 0 \ar[r] 
&[-3em] \cT_{X'}      \ar[r] 
& p^*\cT_{Y'}   \ar[r] 
& p^*\cN_B        \ar[r] 
&[-3em] 0 \\[-4ex]
    h^0: &  & 0 & 16 & \frac{(2d_1 + 1)(2d_1 + 2)(2d_1 + 3)}{6}-1 \\[-4ex]
    h^1: &   & \frac{(2d_1 + 1)(2d_1 + 2)(2d_1 + 3)}{6} - 17 & 0 &   0 \\[-4ex]
    h^2: &   & k &  1  & 1 \\[-4ex]
    h^3: &   & k + \frac{(d_1 - 2)(d_1 - 3)(d_1 - 5)}{2} &  \frac{(d_1 - 2)(d_1 - 3)(d_1 - 5)}{2} &   0 &  ,
\end{tikzcd}
\] 
where $k$ could be $0$ or $1$.

On the other hand, if $d_1$ is even then we have the following table.
\[
\begin{tikzcd}
&[-3em] 0 \ar[r] 
&[-3em] \cT_{X'}      \ar[r] 
&[-1em] p^*\cT_{Y'}   \ar[r] 
&[-1em] p^*\cN_B        \ar[r] 
&[-3em] 0 \\[-4ex]
    h^0: &  & 0 & 16 & \frac{(2d_1 + 1)(2d_1 + 2)(2d_1 + 3)}{6}-1 \\[-4ex]
    h^1: &   & \frac{(2d_1 + 1)(2d_1 + 2)(2d_1 + 3)}{6} - 17 & 0 &   0 \\[-4ex]
    h^2: &   & 1 + k' &  1 + k'  & 0 \\[-4ex]
    h^3: &   & \frac{(d_1 - 2)(d_1 - 3)(d_1 - 5)}{2}  - 2 + k' &  \frac{(d_1 - 2)(d_1 - 3)(d_1 - 5)}{2} - 2 + k' &   0 &  .
\end{tikzcd}
\] 
where $k' = h^2(\cT_{Y'}\otimes L^\vee)$ can possibly be $0$, $1$, or $2$. 

\begin{Prop} \label{prop:h2(TX)}
We have $H^2(\cT_{X'})=0$ if $d_1$ is odd, and $H^2(\cT_{X'})=1$ if $d_1$ is even.
\end{Prop}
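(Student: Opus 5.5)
The plan is to read off $H^2(\cT_{X'})$ from the long exact cohomology sequence of
\[0\to\cT_{X'}\to p^*\cT_{Y'}\to p^*\cN_B\to 0.\]
Since $H^1(p^*\cN_B)=0$ in both parity cases, this gives
\[0\to H^2(\cT_{X'})\to H^2(p^*\cT_{Y'})\xrightarrow{\ \alpha\ } H^2(p^*\cN_B)\to\cdots.\]
So the whole statement comes down to determining the unknown integers $k$ and $k'$ in the two tables.

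\emph{Case $d_1$ odd.} By Step 3b, $H^2(\cT_{Y'}\otimes L^\vee)=0$, so $H^2(p^*\cT_{Y'})=H^2(\cT_{Y'})$ is one-dimensional. Likewise $H^2(p^*\cN_B)$ is one-dimensional; it is the anti-invariant part, which is zero, plus the invariant part $H^2(\cN_B)$. It therefore suffices to show that $\alpha$ is nonzero, which forces $k=0$.

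To see this, I would trace where the classes come from.
\begin{itemize}
\item In the toric Euler sequence, the only summand with nonzero $H^2$ is $\cO_{Y'}(E)$, with $H^2(\cO_{Y'}(E))\cong H^2(\cO_E(E))=H^2(\cO_T(-2,-2))\cong\CC$. This uses $H^i(\cO_{Y'})=0$ for $i>0$.
\item On the other side, $\cN_B=\cN_{B_0}\oplus\cN_E$ because $B_0\cap E=\emptyset$. The class in $H^2(\cN_B)$ lives in $H^2(\cN_E)=H^2(\cO_E(E))$, since $H^2(\cN_{B_0})=0$ by the computation of Step 2.
\item The composite $\cO_{Y'}(E)\to\cT_{Y'}\to\cN_B\to\cN_E$ is the natural restriction $\cO_{Y'}(E)\to\cO_E(E)$. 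The reason is that the toric vector field attached to the ray of $E$ is exactly the one that moves $E$ in its normal direction.
\end{itemize}
Hence the composite $H^2(\cO_{Y'}(E))\to H^2(\cT_{Y'})\to H^2(\cN_E)$ is the isomorphism above. Because $H^2(\cO_{Y'}(E))\to H^2(\cT_{Y'})$ is surjective, with both sides one-dimensional, $\alpha$ is an isomorphism, $k=0$, and $H^2(\cT_{X'})=0$.

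\emph{Case $d_1$ even.} Here $H^2(p^*\cN_B)=0$, so $H^2(\cT_{X'})\cong H^2(p^*\cT_{Y'})$, which has dimension $1+k'$. It remains to prove $k'=h^2(\cT_{Y'}\otimes L^\vee)=0$, where now $L=\frac{d_1}{2}E+\pi^*\cO_T(d_1,d_1)$ rather than the odd-case line bundle used in Step 3b.

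I would redo Step 3b for this $L$, using the two relative tangent sequences.
\begin{itemize}
\item For $\Theta_{Y'/T}\otimes L^\vee\cong\cO_{Y'}(2-\tfrac{d_1}{2})\otimes\pi^*\cO_T(2-d_1,2-d_1)$, I would push forward along $\pi$. For $d_1\ge 4$ only $R^1\pi_*$ can be nonzero. Relative duality then identifies it with a sum of line bundles $\cO_T(a,a)$ with $a\le -2$, and on $T=\PP^1\times\PP^1$ these have cohomology only in degree $2$. So this term has cohomology only in degree $3$.
\item For $\pi^*\cT_T\otimes L^\vee$, the pieces $\pi^*\cO_T(2,0)\otimes L^\vee$ and $\pi^*\cO_T(0,2)\otimes L^\vee$ are treated the same way.
\item The borderline case $d_1=4$ is handled separately, since then $\cO_{Y'}(2-\tfrac{d_1}{2})=\cO_{Y'}$.
\end{itemize}
If everything is concentrated in $H^3$, then $k'=0$ and $H^2(\cT_{X'})$ is one-dimensional, as claimed.

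The main obstacle is the index bookkeeping in the even case. The half-integer shift $\frac{d_1}{2}E$ makes the Künneth ranges on $T$ land near the boundary $a=-1$, where $\cO_T(a,a)$ has no cohomology at all. One has to check that no summand $\cO_T(a,b)$ with $a\ge 0>b$ or $b\ge 0>a$ appears, since such mixed-sign bundles have $H^1$ and would produce $H^2$ on $Y'$. If that bookkeeping proves unwieldy, I would instead use Serre duality, $H^2(\cT_{Y'}\otimes L^\vee)\cong H^1(\Omega_{Y'}\otimes L\otimes\cO_{Y'}(K_{Y'}))$, together with Bott–Danilov–Steenbrink vanishing on the toric variety $Y'$. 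This needs $L+K_{Y'}=(\tfrac{d_1}{2}-2)E+\pi^*\cO_T(d_1-4,d_1-4)$ to be ample, which holds for $d_1\ge 6$; the case $d_1=4$ would again be checked directly.
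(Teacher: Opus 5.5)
\textbf{Odd case.} Your argument here is correct and is essentially the paper's. The paper also reduces to showing that $H^2(\cT_{Y'})\to H^2(\cN_B)$ is an isomorphism, by lifting the unique class of $H^2(\cT_{Y'})$ to $H^2(\cO_{Y'}(E))\cong H^2(\cO_T(-2,-2))$ through the Euler sequence. It routes the comparison through $H^2(\cO_{Y'}(E))\to H^2(\cO_{Y'}(E+B_0))$, whereas you use the splitting $\cN_B=\cN_{B_0}\oplus\cN_E$. These amount to the same thing. One small correction: the composite $\cO_{Y'}(E)\to\cN_E$ is restriction followed by multiplication by the nowhere-vanishing section $s_{B_0}|_E$ of $\cO_E(B_0)\cong\cO_E$, which is harmless.

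\textbf{Even case.} This part cannot be completed, because the vanishing $k'=0$ you set out to prove is false for every even $d_1\ge 6$.

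\emph{Why your fallback fails.} Let $E_\infty\sim E+\pi^*\cO_T(2,2)$ be the section disjoint from $E$. Then $L=\tfrac{d_1}{2}E_\infty$ and $K_{Y'}=-2E_\infty$. Hence $N=K_{Y'}+L=(\tfrac{d_1}{2}-2)E_\infty$ is trivial on $E$. It is therefore not ample, and Bott--Danilov--Steenbrink vanishing does not apply.

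\emph{A lower bound $k'\ge 2$.} By Serre duality $k'=h^1(\Omega_{Y'}(N))$. Take $s\in H^0(\cO_{Y'}(N))$ to be a power of the equation of $E_\infty$. Then $s|_E$ trivializes $\cO_E(N)$, and $\pi|_E\colon E\to T$ is an isomorphism. Consequently the composite
\[
H^1(T,\Omega_T)\xrightarrow{\ \eta\,\mapsto\, s\cdot\pi^*\eta\ }H^1(Y',\Omega_{Y'}(N))\longrightarrow H^1(E,\Omega_E)
\]
is an isomorphism, which gives $k'\ge 2$.

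\emph{Where your bookkeeping breaks.} The top summand of $R^1\pi_*(\pi^*\cO_T(2,0)\otimes L^\vee)$ is exactly the mixed-sign bundle $\cO_T(0,-2)$ you were worried about. Symmetrically, $\cO_T(-2,0)$ appears for $\pi^*\cO_T(0,2)\otimes L^\vee$. So these pieces are not ``treated the same way'' as $\Theta_{Y'/T}\otimes L^\vee$: one gets $H^2(\pi^*\cT_T\otimes L^\vee)\cong\CC^2$.

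\emph{Conclusion.} Since $H^2(\Theta_{Y'/T}\otimes L^\vee)=0$ for $d_1\ge 6$, the relative tangent sequence gives $k'\le 2$, hence $k'=2$. Therefore
\[
h^2(\cT_{X'})=h^2(\cT_{Y'}(-\log B_0))+k'=1+2=3.
\]

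So no bookkeeping can yield $h^2(\cT_{X'})=1$ in the even case. The paper's proof has the same hole. It asserts $H^2(\cT_{Y'}\otimes L^\vee)=0$ from Step 3b, but that vanishing holds only for the odd-case bundle $L=\tfrac{d_1+1}{2}E+\pi^*\cO_T(d_1,d_1)$. Indeed, the paper's own even-case table leaves $k'\in\{0,1,2\}$ undetermined. What your even-case argument does establish correctly is that the invariant part $H^2(\cT_{Y'}(-\log B_0))\cong H^2(\cT_{Y'})$ is one-dimensional.
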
 
\begin{proof} Since $H^1(p^*\cN_B)=0$, we have an injection from $H^2(\cT_{X'})$ to $H^2(p^*\cT_{Y'})$. Since we have $H^2(\cT_{Y'}\otimes L^\vee)=0$, there is no anti-invariant part in $H^2(\cT_{X'})$. It suffices to understand the map from $H^2(\cT_{Y'})$ to $H^2(\cN_B)$. 

If $d_1$ is even then $H^2(\cN_B)=0$, therefore we have $H^2(\cT_{X'})=1$.
% to compute $H^2(T_{X'})$. We have $H^2(T_{Y'})\cong H^2(Y', \cO_{Y'}(E))$ from the toric Euler short exact sequence and $H^2(Y', \cO_{Y'}(E))\cong H^2(Y',N_B)$. And both cohomologies are one-dimensional, which correspond naturally to $H^2(\cO_T(-2, -2))$. 

Now, suppose that $d_1$ is odd.
We claim that this map is an isomorphism, from which $k=0$ follows. Consider the following commutative diagram
\[
\begin{tikzcd}
    H^2(Y', \cT_{Y'}) \ar[r]
    & H^2(Y, \cN_B) \cong H^2(B, \cO_B(B)) \\
    H^2(Y', E) \ar[u, "\sim", sloped] \ar[r, "\sim"] 
    % \ar[ru, dashed]
    & H^2(Y', E + B_0), \ar[u, "\sim", sloped] 
\end{tikzcd}
\]
where the map on the left comes from the toric Euler sequence and the map on the right is the restriction.
On the bottom row, both $H^2(Y', E)$ and $H^2(Y', E + B_0)$ are isomorphic to the one-dimensional $H^2(T, \cO_T(-2,-2))$ via the Leray spectral sequence, and this is compatible with the map between them (induced by $- \otimes \cO_{Y'}(B_0)$), so the bottom map is an isomorphism. This yields the desired isomorphism on the top.
\end{proof}

\begin{Thm}
Type II gives a boundary divisor in the moduli space $\cM_d^\mathrm{can}$ and smoothness of $\cM_d^\mathrm{sm}$ extends to boundary. 
\end{Thm}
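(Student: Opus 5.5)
Our plan is to compare dimensions. We compute the dimension of the locus of Type II threefolds in $\cM_d^\mathrm{can}$ and check it equals $\dim \cM_d^\mathrm{can}-1$. Then we use the tangent cohomology tables above to show that deformations of Type II threefolds are unobstructed, so the general Type II point lies on a smooth part of the moduli space.

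\textbf{Dimension of $\cM_d^\mathrm{can}$.} A general point is a double cover of $\PP^3$ branched over a degree $2d_1$ surface. The moduli dimension is
\[
\binom{2d_1+3}{3}-16=\frac{(2d_1+1)(2d_1+2)(2d_1+3)}{6}-16 ,
\]
since $\dim \mathrm{PGL}_4=15$ and we also quotient by scalars on the equation.

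\textbf{Dimension of the Type II locus.} The Type II threefolds are governed by $(Y',B_0)$, or equivalently by $Y$ and $\tau_*B_0$. The branch divisors form the linear system $|\cO_{Y'}(B)|$. The relevant count is $h^0(\cN_B)=\frac{(2d_1+1)(2d_1+2)(2d_1+3)}{6}-1$, computed in Step 2. We subtract $h^0(\cT_{Y'})=16$, the dimension of $\mathrm{Aut}(Y')$; Step 1 gives $h^0(\cT_{X'})=0$, so the stabilizers are finite. The Type II locus therefore has dimension
\[
\frac{(2d_1+1)(2d_1+2)(2d_1+3)}{6}-17 ,
\]
which is exactly one less than the previous count, so it is a divisor.

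\textbf{Comparing with $h^1(\cT_{X'})$.} The tables give $h^1(\cT_{X'})=\frac{(2d_1+1)(2d_1+2)(2d_1+3)}{6}-17$ in both parities. So the Kuranishi space of $X'$ has tangent dimension equal to the dimension of the Type II family. Hence every first-order deformation of $X'$ is realized by a Type II deformation: moving $B$ in $|B|$ modulo automorphisms, with $H^1(\cT_{Y'})=0$. So $\mathrm{Def}(X')$ is smooth of that dimension.

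\textbf{Passing to the canonical model $X$.} Deformations of $X'$ do not deform to the general fibre; Example \ref{type II and III} notes there is no deformation of $X'$ to $X_t$. We therefore work with $X$, i.e. with the $\QQ$-Gorenstein deformations of the pair $(Y,\tfrac12 \tau_*B_0)$. We use the double cover correspondence between $\cM_d^\mathrm{can}$ and the pairs, noting that $X\to Y$ is étale over the vertex when $d_1$ is even. The pair $(Y,B_{d,Y})$ is a smooth point of $\overline{\cN}_{(d,4)}$ by the theorem stated in the introduction (the type II case, cf.\ \cite[Proposition 3.10]{DeV22}). The one-parameter smoothing $tw-\sigma$ of Proposition \ref{surface in type II} supplies the single extra direction transverse to the Type II locus.

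Locally at a general Type II point, $\cM_d^\mathrm{can}$ is then the pair moduli, which is smooth. Inside it, the Type II locus is the hypersurface $t=0$, smooth of codimension one; it is the image of the smooth $\mathrm{Def}(X')$, which equals the equisingular or locally trivial deformations of $X$. Thus smoothness of $\cM_d^\mathrm{sm}$ extends across this boundary divisor.

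\textbf{Main obstacle.} The hardest step is this last identification: showing that the deformation space of $X$ (or of the pair) is smooth of dimension exactly $h^1(\cT_{X'})+1$. When $d_1$ is even, $h^2(\cT_{X'})=1$ by Proposition \ref{prop:h2(TX)}, so smoothness cannot be read off from $X'$ alone. I would handle this by working on the pair side, where the obstructions vanish, and transporting the result through the double cover correspondence.
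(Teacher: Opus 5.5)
Your first three steps are essentially the paper's own proof: the Type II locus is the image of $\mathrm{Def}(X')$, $h^1(\cT_{X'})=\binom{2d_1+3}{3}-17$ is one less than $\dim\cM_d^{\mathrm{can}}$, and $\mathrm{Def}(X')$ is unobstructed. The paper gets surjectivity of $H^0(p^*\cN_B)\to H^1(\cT_{X'})$ directly from $H^1(p^*\cT_{Y'})=H^1(\cT_{Y'})\oplus H^1(\cT_{Y'}\otimes L^\vee)=0$. Your dimension-matching version needs both summands, not only $H^1(\cT_{Y'})=0$, and it also needs the Type II family to be finite-to-one onto its image. The paper stops there, and only afterwards deduces smoothness of $\overline{\cN}_{d,4}$ at type II pairs from this theorem. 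The gap is in your extra step, which runs that implication backwards.

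You assert that near a general Type II point $\cM_d^{\mathrm{can}}$ \emph{is} the pair moduli, and you import smoothness from $\overline{\cN}_{(d,4)}$. The double-cover correspondence only identifies the invariant deformations, $\mathrm{Def}(X)^{\iota}\cong\mathrm{Def}(Y,\tfrac12 B_{d,Y})$, where $\iota$ is the covering involution. Smoothness of $\mathrm{Def}(X)$ forces smoothness of this fixed locus, but not conversely. So you need every small deformation of $X$ to remain a double cover, i.e.\ $T^1_X$ must have no anti-invariant part.

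This is not automatic, precisely in the étale case $d_1$ even that you single out. There $X$ has two singular points $q_1,q_2$ over the vertex, each a cone over $(\FF_0,\cO(2,2))$ with one-dimensional $T^1$, and $\iota$ interchanges them. Hence $H^0(\mathcal{T}^1_X)^-\cong\CC$, corresponding to smoothing $q_1$ and $q_2$ in opposite directions. You would have to show this class does not lift, i.e.\ that it maps injectively to $H^2(\cT_X)^-$ in the local-to-global sequence. The global part $H^1(\cT_X)^-$ is controlled by $H^1(\cT_{Y'}\otimes L^\vee)=0$, so it causes no trouble.

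If the class did lift, the Zariski tangent space of $\cM_d^{\mathrm{can}}$ at $[X]$ would have dimension at least $\binom{2d_1+3}{3}-15$. Nearby smooth double covers of $\PP^3$ have only $\binom{2d_1+3}{3}-16$ moduli, so $[X]$ would be a singular point, and nothing on the pair side detects this. For $d_1$ odd this local issue does not arise: the single node over the vertex is $\iota$-fixed and its smoothing $xy-zw=t$ is $\iota$-invariant. Your ``main obstacle'' paragraph points at the right step, but proposes to resolve it with the very correspondence that is in question.
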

\begin{proof}
    If $d_1$ is odd then $H^2(\cT_{X'})=0$ implies directly the theorem. 
    
    Suppose $d_1$ is even. Even though $h^2(\cT_{X'})=1$, all deformations of $X'$ comes from $H^0(\cN_B)$ since the map from $H^0(\cN_B)$ to $H^1(\cT_{X'})$ is surjective. Since $H^1(\cN_B)=0$, this deformation has no obstruction.
\end{proof}

Through the double covering corresponding, we get the following corollary. This corollary is also mentioned in \cite[Proposition 3.10]{DeV22}. It can be also proven by using \cite[Lemma 5.4]{ADL23}, the vanishing $H^1(\cO_Y(B))=0$, and \cite[Proposition 3.3]{Has99}, because $H^2(\cT_Y)=0$ and $H^1(\cO_Y(B))=0$ imply $H^2(\cT_{(Y, B)})=0$, i.e., the obstruction space for the deformation of the pair is zero.

\begin{Cor}
For an even $d$, the moduli space $\overline{\cN}_{d, 4}$ is smooth at $[(Y, B_{d, Y})]$ if $Y$ is the type II, they give a a boundary divisor of $\overline{\cN}_{d, 4}$.
\end{Cor}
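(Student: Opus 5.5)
Two things have to be shown: the deformation functor of the pair $(Y,B_{d,Y})$ is unobstructed, and the locus of type II pairs has codimension one. I would follow the route indicated just before the statement. For unobstructedness, use the standard exact sequence for the tangent sheaf of a pair \cite[Proposition 3.3]{Has99}:
\[
0\to \cT_Y(-\log B)\to \cT_Y\to \cN_{B/Y}\to 0 .
\]
Here $B=B_{d,Y}$ with $d$ even is Cartier on $Y$, since by Proposition~\ref{surface in type II} a general $B$ avoids the vertex. Hence $\cN_{B/Y}=\cO_B(B)$. From the long exact sequence, $H^2(\cT_{(Y,B)})=0$ follows once $H^2(\cT_Y)=0$ and $H^1(\cO_B(B))=0$.

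For the second vanishing, I would use $0\to\cO_Y\to\cO_Y(B)\to\cO_B(B)\to 0$. Since $Y$ has rational singularities, we can compute on $Y'$ with $\tau^*B=B_0\sim d_1E+\pi^*\cO_T(2d_1,2d_1)$. The Leray computation already done in Step~2 (the even-$d_1$ branch, together with the odd case without the extra $E$ summand) gives $H^1(\cO_Y(B))=H^2(\cO_Y)=0$. For $H^2(\cT_Y)$ I would invoke \cite[Lemma 5.4]{ADL23}. Alternatively, I would compare with $\tau_*\cT_{Y'}$ and use the toric Euler sequence of Step~3a, checking that the one-dimensional $H^2(\cT_{Y'})\cong H^2(\cO_{Y'}(E))$ comes from the exceptional section and does not descend to $Y$ (local cohomology at the vertex). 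Granting these, the obstruction space vanishes. The local-to-global comparison also needs $\mathcal{E}xt^1$/$T^1$ control at the vertex; the $\QQ$-Gorenstein deformations of the cone over the quadric are the ones smoothing it to $\PP^3$, and these are unobstructed. So the functor is smooth and $\overline{\cN}_{(d,4)}$ is smooth at $[(Y,B_{d,Y})]$.

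For the divisor claim, I would count dimensions. Pairs $(Y,B)$ of type II form the family of complete intersections of type $(2,d/2)$ in $\PP(1,1,1,1,2)$, modulo automorphisms. Proposition~\ref{surface in type II} shows that each such pair deforms, via $tw-\sigma$, to a pair $(\PP^3,B_d)$. So this is a boundary stratum, and it is one-parameter: the $t$-direction is a single smoothing direction transverse to it. Concretely, I would compute $h^0(\cO_Y(B))-1-\dim\mathrm{Aut}(Y)$ using $h^0(\cT_{Y'})=16$ from Step~3a. I would then compare this with $\dim\cN_d=\binom{d+3}{3}-16$ and check that the difference is exactly $1$. Equivalently, in the smooth local deformation space $H^1(\cT_{(Y,B)})$, the locus where $Y$ stays of type II is the zero locus of the one-dimensional smoothing parameter of the vertex.

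I expect the main obstacle to be the vanishing $H^2(\cT_Y)=0$ on the singular $Y$, together with the matching of local $T^1$ at the vertex with the single $\QQ$-Gorenstein smoothing direction. If the citation is not used, I would first try to identify $H^2(\cT_Y)$ with the $\tau$-invariant part of $H^2(\cT_{Y'}(-\log E))$ and verify on the toric fan that it vanishes.
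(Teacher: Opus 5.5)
Your proposal is correct and is essentially the paper's second argument: $H^2(\cT_Y)=0$ from \cite[Lemma 5.4]{ADL23}, together with $H^1(\cO_Y(B))=H^2(\cO_Y)=0$ (computed on $Y'$ as in Step~2, without the extra $E$), forces $H^1(\cN_B)=0$ and hence $H^2(\cT_{(Y,B)})=0$ via \cite[Proposition 3.3]{Has99}. Your dimension count, $\binom{d+3}{3}-1-16$ against $\dim\cN_d=\binom{d+3}{3}-16$, gives the same numerics that the paper obtains through the double cover, namely $h^1(\cT_{X'})=\binom{2d_1+3}{3}-17$.
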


\section{Boundary from the type III and moduli of double cover}\label{sec:type_III}
The purpose of this section is to describe the pair of the type III degeneration of $\PP^3$ and hypersurfaces.
We have the same description for $(Y, B_{d, Y})$ as Proposition~\ref{surface in type II} if it is the type III degeneration of the pairs $(\PP^3, B_d)$ where $B_d\in |\cO_{\PP^3}(d)|$. 

In this section, we also generalize Soo's result for $d=10$ (cf. \cite{Soo24}) to any even integer $d\ge 10$.

Let $V=\cO_T\oplus \cO_T(-2, -4)$ and $Y'=\PP(V)$ where $T= \FF_2$, and let $\pi: Y'\to T$. Here, $\cO_T(a, b)$ means $\cO(a\si+b\ell)$, where $\si$ is the infinite section and $\ell$ is a fiber in $T$. Let $E$ be the $\pi$-section corresponding to the quotient line bundle $V \to \cO_T(-2, -4)\to 0$. Then there exists a smooth divisor $B_0\in |d_1 E + \pi^*(\cO_T(2d_1, 4d_1))|$ such that $B_0\cap E=\emptyset$. Then, we have a double covering $p: X'\to Y'$ branched over $B \coloneqq B_0 + E$ (resp. $B=B_0$) if $d_1$ is odd (resp. even). Then $X'$ is a smooth threefold with $({\rm Vol}, p_g)=(2(d_1 - 4)^3, \frac{(d_1 - 1)(d_1 - 2)(d_1 - 3)}{6})$, but not minimal. 

The same steps 1, 2, and 3a in Section~\ref{sec:Type_II} can also be carried out in this case of Type III. In step 3a, one uses the following toric data for $Y'$ of type III:
\begin{center}
    \begin{tabular}{c||c|c|c|c|c|c}
        generator of $\rho$ &
        $(1,0,4)$ & $(0,1,2)$ & 
        $(-1,2,0)$ & $(0,-1,0)$ &
        $(0,0,1)$ & $(0,0,-1)$ \\ \hline
        divisor class $[D_\rho]$ & 
        $\Sigma_0$ & $\Sigma_1$ &
        $\Sigma_0$ & $\Sigma_1 + 2\Sigma_0$ &
        $E$ & $E + 2\Sigma_1 + 4\Sigma_0$ 
    \end{tabular}
\end{center}
Here, $\Sigma_0$ is the pull back of a fiber in $T$ and $\Sigma_1$ is the pull back of the negative section in $T$.

However, step 3b is more problematic since there are many nonzero cohomologies.
We can still compute $H^1(\cT_{Y'}\otimes L^\vee)$ by tensoring the toric Euler sequence with $L^\vee$:
\[
\begin{tikzcd}
    & 0 \ar[r] & 
    (\cO_{Y'} \otimes L^\vee)^{\oplus 3} \ar[r] & 
    \displaystyle\bigoplus_{\rho \in \Sigma(1)} \cO_{Y'}(D_\rho) \otimes L^\vee \ar[r] & 
    \cT_{Y'} \otimes L^\vee \ar[r] & 0 \\[-4ex]
\end{tikzcd}
\]
% If $d_1$ is odd then 
% $$H^1(\cT_{Y'}\otimes L^\vee)\cong H^1(D_\rho-L)\cong H^2(K_{Y'}+L-D_\rho)^\vee\cong H^2(T, K_T)^\vee\cong H^0(\cO_T)=\CC$$ for $D_\rho=\pi^*(\cO_T(1, 2))$.
Carrying out the computations, we can infer the following proposition.
\begin{Prop}
    We have
    \begin{itemize}
    \item $h^1(\cT_{Y'})=1$.
    \item $h^1(\cT_{Y'}\otimes L^\vee)=1$ and $h^1(\cT_{X'}) = \frac{(2d_1 + 1)(2d_1 + 2)(2d_1 + 3)}{6} - 16$ if $d_1$ is odd. 
    \item $h^1(\cT_{Y'}\otimes L^\vee)=0$ and $h^1(\cT_{X'}) = \frac{(2d_1 + 1)(2d_1 + 2)(2d_1 + 3)}{6} - 17$ if $d_1$ is even.
    % \item $h^2(\cT_{X'}) - h^3(\cT_{X'})
    % = - \frac{1}{2} \left(
    % d_1^3 - 10 d_1^2 + 31 d_1 - 32 \right)$.
    \end{itemize}
    This yields the following table if $d_1$ is odd:
    \[
\begin{tikzcd}
&[-3em] 0 \ar[r] 
&[-3em] \cT_{X'}      \ar[r] 
& p^*\cT_{Y'}   \ar[r] 
& p^*\cN_B        \ar[r] 
&[-3em] 0 \\[-4ex]
    h^0: &  & 0 & 17 & \frac{(2d_1 + 1)(2d_1 + 2)(2d_1 + 3)}{6}-1 \\[-4ex]
    h^1: &   & \frac{(2d_1 + 1)(2d_1 + 2)(2d_1 + 3)}{6} - 16 & 2 &   0 \\[-4ex]
    h^2: &   & * & * & 1 \\[-4ex]
    h^3: &   & * & * &   0. &  
\end{tikzcd}
\] 
and the following table if $d_1$ is even:
\[
\begin{tikzcd}
&[-3em] 0 \ar[r] 
&[-3em] \cT_{X'}      \ar[r] 
& p^*\cT_{Y'}   \ar[r] 
& p^*\cN_B        \ar[r] 
&[-3em] 0 \\[-4ex]
    h^0: &  & 0 & 17 & \frac{(2d_1 + 1)(2d_1 + 2)(2d_1 + 3)}{6}-1 \\[-4ex]
    h^1: &   & \frac{(2d_1 + 1)(2d_1 + 2)(2d_1 + 3)}{6} - 17 & 1 &   0 \\[-4ex]
    h^2: &   & * & * & 1 \\[-4ex]
    h^3: &   & * & * &   0. &  
\end{tikzcd}
\] 
\end{Prop}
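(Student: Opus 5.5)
We reduce everything to line bundle cohomology on the toric variety $Y'$, which in turn becomes cohomology of line bundles on $T=\FF_2$ via $\pi_*$. For $\cO_{Y'}(mE)\otimes\pi^*M$ we use $\pi_*\cO_{Y'}(m)=\mathrm{Sym}^m(\cO_T\oplus\cO_T(-2,-4))$ for $m\ge 0$ and $R^1\pi_*=0$ for $m\ge -1$. Relative duality handles the remaining small negative $m$, for instance in $L^\vee$.

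\emph{Step 1: $h^1(\cT_{Y'})=1$.} We use the toric Euler sequence with the table of divisor classes. Its middle term is
\[
\cO(\Sigma_0)^{\oplus 2}\oplus\cO(\Sigma_1)\oplus\cO(\Sigma_1+2\Sigma_0)\oplus\cO(E)\oplus\cO(E+2\Sigma_1+4\Sigma_0).
\]
The summand $\cO_{Y'}(\Sigma_1)=\pi^*\cO_T(\si)$ has $h^0=1$ and $h^1=h^1(\FF_2,\cO(\si))=0$. The summand $\cO(E)$ pushes forward to $\cO_T\oplus\cO_T(-2,-4)$. Here $h^1(\FF_2,\cO_T(-2\si-4\ell))$ is computed via $\pi_{T*}$ to $\PP^1$: since $R^1$ of $\cO(-2\si)$ on the ruling gives $\cO_{\PP^1}(2)$, this $h^1$ comes out to $h^0(\PP^1,\cO(2)\otimes\cO(-4)\otimes\ldots)$. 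I will compute it carefully, and I expect the unique $h^1=1$ to come from $\cO(E)$ or from $\cO(E+2\Sigma_1+4\Sigma_0)$, whose pushforward is $\cO_T(2,4)\oplus\cO_T$. Since $\cO_{Y'}^{\oplus3}$ has no $h^1$ or $h^2$, we get $h^1(\cT_{Y'})=\sum h^1(\cO(D_\rho))$. Summing the $h^0$'s also gives $h^0(\cT_{Y'})=20-3=17$, matching the table. The main point of this step is the identification $h^1=1$, which reflects the deformation $\FF_2\rightsquigarrow\FF_0$.

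\emph{Step 2: $h^1(\cT_{Y'}\otimes L^\vee)$.} Write $L=\tfrac{d_1+1}{2}E+\pi^*\cO_T(d_1,2d_1)$ if $d_1$ is odd, and $L=\tfrac{d_1}{2}E+\pi^*\cO_T(d_1,2d_1)$ if $d_1$ is even. We tensor the Euler sequence by $L^\vee$. We have $H^i(L^\vee)=0$ for $i\le 2$ (Kawamata–Viehweg/Serre duality, since $L$ is nef and big away from $E$; alternatively compute directly by duality, $H^i(L^\vee)\cong H^{3-i}(K_{Y'}+L)^\vee$ with $K_{Y'}=-2E-\pi^*\cO_T(4,8)$). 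Hence $h^1(\cT_{Y'}\otimes L^\vee)=\sum_\rho h^1(D_\rho-L)+(\text{kernel correction in }H^2)$. We compute each $h^1(D_\rho-L)$ by Serre duality as $h^2(K_{Y'}+L-D_\rho)$ and push forward to $T$.

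For $d_1$ odd, $D_\rho=\Sigma_1+2\Sigma_0$ gives $K_{Y'}+L-D_\rho=\tfrac{d_1-3}{2}E+\pi^*(\ldots)$, which pushes forward to a sum containing $K_T$ exactly once, so it contributes $h^2(K_T)=1$. All the other summands vanish. For $d_1$ even the coefficient of $E$ shifts by $1/2$, and no summand hits $K_T$, giving $0$. We also need the map $H^1\to H^2$ from the $(L^\vee)^{3}$-term; it is zero there because $H^2(L^\vee)=0$. This bookkeeping of which twists land on $K_T$ is the step I expect to be the main obstacle, because the $\FF_2$ line bundle cohomology has many nonzero terms.

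\emph{Step 3: assemble.} We have $h^1(p^*\cT_{Y'})=h^1(\cT_{Y'})+h^1(\cT_{Y'}\otimes L^\vee)$, which is $2$ (odd) or $1$ (even). From Step 2 of Section~\ref{sec:Type_II} (with the $\FF_2$ data), $h^0(p^*\cN_B)=\frac{(2d_1+1)(2d_1+2)(2d_1+3)}6-1$ and $h^1(p^*\cN_B)=0$. Note that $h^1(\cN_B\otimes L^\vee)=0$ as well, to be checked by the same pushforward. We also have $h^0(\cT_{X'})=0$, and $h^0(\cT_{Y'}\otimes L^\vee)=0$ since $-L$ is negative on the fibres. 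The long exact sequence
\[
0\to H^0(p^*\cT_{Y'})\to H^0(p^*\cN_B)\to H^1(\cT_{X'})\to H^1(p^*\cT_{Y'})\to 0
\]
then gives $h^1(\cT_{X'})=h^0(p^*\cN_B)-17+h^1(p^*\cT_{Y'})$. This yields the two stated values and the tables.
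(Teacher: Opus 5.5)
Your route is the paper's: toric Euler sequence, pushing line bundles down to $T=\FF_2$, and Serre duality for the $L^\vee$-twist. Your Step~2 matches the paper's computation: for $d_1$ odd, only $D_\rho=\Sigma_1+2\Sigma_0$ contributes, through $H^2(T,K_T)$. Step~3 is fine.

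The genuine error is in Step~1, at the point you call the main one. You assert $h^1(\FF_2,\cO(\si))=0$, but it is $1$. From $0\to\cO_T\to\cO_T(\si)\to\cO_\si(\si)\cong\cO_{\PP^1}(-2)\to 0$ one gets $h^0(\cO_T(\si))=1$ and $h^1(\cO_T(\si))=h^1(\cO_{\PP^1}(-2))=1$; equivalently, $\chi(\cO_T(\si))=0$. The summand $\cO_{Y'}(\Sigma_1)=\pi^*\cO_T(\si)$ is precisely where $H^1(\cT_{Y'})$ lives: it is $H^1(\cT_{\FF_2})$, as in the remark following the proposition.

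Your guess that the class comes from $\cO(E)$ or $\cO(E+2\Sigma_1+4\Sigma_0)$ fails because $\cO_T(-2\si-4\ell)=K_T$. Thus $\pi_*\cO(E)=\cO_T\oplus K_T$ has $h^1=0$ and contributes $h^2=1$; this is the class giving $h^2(\cT_{Y'})=1$. Meanwhile $\pi_*\cO(E+2\Sigma_1+4\Sigma_0)=\cO_T(-K_T)\oplus\cO_T$ has no higher cohomology. So with the values you wrote down, $\sum_\rho h^1(\cO(D_\rho))$ would be $0$, not $1$. After the correction your argument gives $h^1(\cT_{Y'})=h^2(\cT_{Y'})=1$, and your count $h^0=20-3=17$ stands.

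Smaller points. For $d_1$ odd, $L|_E=\cO_T(-\si-2\ell)$ is not nef, so Kawamata--Viehweg does not give $H^{i}(L^\vee)=0$ for $i\le 2$. Use your direct alternative instead: $R^1\pi_*\cO_{Y'}(-k)=\bigoplus_{j=1}^{k-1}\cO_T(2j(\si+2\ell))$. After twisting by $-d_1(\si+2\ell)$, every summand is a negative multiple of the nef and big class $\si+2\ell$, hence has no $H^0$ or $H^1$.

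The aside about $h^1(\cN_B\otimes L^\vee)$ is superfluous, and read literally it would change $h^0(p^*\cN_B)$. The cokernel $\cT_{X'/Y'}\cong\cO_R(2R)$ is invariant, i.e.\ $p_*\cT_{X'}=\cT_{Y'}(-\log B)\oplus(\cT_{Y'}\otimes L^\vee)$, so only $h^i(\cN_B)$ enters.

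Finally, your own pushforward shows that for $d_1$ even, $\pi_*\cO_{Y'}(B_0)=\bigoplus_{m=0}^{d_1}\cO_T(2m\si+4m\ell)$ has no $K_T$ summand. Hence $h^2(\cN_B)=0$ there, as in the type II table. The entry $1$ in the even table therefore cannot be ``yielded''; it seems to be carried over from the odd case, though it does not affect $h^1(\cT_{X'})$.
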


\begin{Rmk}
    By considering the toric Euler sequence, we have 
    $$H^1(\cT_{Y'})\cong H^1(\cO_{Y'}(\pi^*\cO_{\mathbb F_2}(1, 0)))\cong H^1(\cT_{\mathbb F_2}).$$
    Since $H^2(\cT_{\mathbb F_2})=0$, i.e. no obstruction of the deformation of $\mathbb F_2$, the obstruction map from $H^1(\cT_{Y'})$ to $H^2(\cT_{Y'})$ is zero even though $h^2(\cT_{Y'})=1$
\end{Rmk}

We will call a threefold is Type III if it is a double cover of the type III threefold $Y$ branched over $B_{d, Y}$.

\begin{Thm}
    If $d_1$ is even then the moduli space $\cM_d^\mathrm{can}$ at Type III is smooth. 
\end{Thm}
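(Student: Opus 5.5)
We argue as in the type II case with $d_1$ even: show that every first-order deformation of $X'$ comes from deforming the data that are visibly unobstructed, namely the pair $(Y', B)$. Since $d_1$ is even, $B=B_0$ is disjoint from $E$. The double cover $X'\to Y'$ is branched only along $B_0$, so $X\to Y$ is étale over the cone point and $X$ is the canonical model. Deformations of $X$ and of $X'$ agree here because $p^{-1}(E)$ is contracted and the contraction deforms along with it. It therefore suffices to show that the versal deformation space of $X'$, or equivalently of the pair $(Y',B_0)$ together with the involution, is smooth.

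The key steps run in this order. First we use the table for $d_1$ even in the preceding Proposition. Since $H^1(p^*\cN_B)=0$, the long exact sequence of
\[
0\to \cT_{X'}\to p^*\cT_{Y'}\to p^*\cN_B\to 0
\]
gives an exact sequence $H^0(p^*\cT_{Y'})\to H^0(p^*\cN_B)\to H^1(\cT_{X'})\to H^1(p^*\cT_{Y'})\to 0$. Here $h^1(p^*\cT_{Y'})=h^1(\cT_{Y'})+h^1(\cT_{Y'}\otimes L^\vee)=1+0=1$. The dimension count $h^1(\cT_{X'})=\frac{(2d_1+1)(2d_1+2)(2d_1+3)}{6}-17=(h^0(\cN_B)-16)+1$ shows that $H^0(\cT_{Y'})\to H^0(\cN_B)$ is injective and $H^1(\cT_{X'})\to H^1(\cT_{Y'})$ is surjective. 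Thus $H^1(\cT_{X'})$ is an extension of the one-dimensional $H^1(\cT_{Y'})\cong H^1(\cT_{\FF_2})$ by $H^0(\cN_B)/H^0(\cT_{Y'})$. Moreover $H^1(\cT_{Y'}\otimes L^\vee)=0$, so every deformation is invariant: every deformation of $X'$ is again a double cover.

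Second, we realize each piece geometrically by an unobstructed family. The directions in $H^0(\cN_B)$ are deformations of $B_0$ inside the fixed $Y'$, and these are unobstructed because $H^1(\cN_B)=0$. The direction in $H^1(\cT_{Y'})$ is the deformation $\FF_2\rightsquigarrow\FF_0$ of the base $T$, which is unobstructed by the Remark preceding the theorem since $H^2(\cT_{\FF_2})=0$. This deformation carries $Y'=\PP(\cO_T\oplus\cO_T(-2\si-4\ell))$ to the type II bundle, because $-2\si-4\ell=2K_T/2$, i.e. $\cO_T(-2,-4)=\cO_T(K_T)$ deforms with $T$. The divisor class $d_1E+\pi^*\cO_T(2d_1,4d_1)$ also extends over the family. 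Since $h^1(\cO_{Y'}(B))=0$, the divisor $B_0$ and the line bundle $L$ with $2L\sim B_0$ extend as well, and we may take the double cover in the family. We thus obtain a smooth family over a base of dimension $\dim H^1(\cT_{X'})$ whose Kodaira--Spencer map is surjective, hence an isomorphism. This is exactly the family of Example~\ref{type II and III} with the $\lambda$-parameter, combined with the variations of $B_0$. By the standard criterion (a versal family over a smooth base with bijective Kodaira--Spencer map), the Kuranishi space of $X'$ is smooth, even though $H^2(\cT_{X'})$ may be nonzero.

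The main obstacle is the surjectivity of the Kodaira--Spencer map onto the extension class, i.e. checking that the $\FF_2\to\FF_0$ direction really lifts to $X'$. This requires that $B_0$ and the square root $L$ extend over the deformation of $Y'$. I would first verify $H^1(Y',\cO_{Y'}(B_0))=0$ and $H^1(\cO_{Y'})=0$ by the same Leray computation as in Step 2, using $\mathrm{Sym}^{d_1}(\cO\oplus\cO_T(-2,-4))\otimes\cO_T(2d_1,4d_1)$ on $\FF_2$. Then I would pass from $X'$ to the canonical model $X$, using that the contraction of $p^{-1}(E)$ is preserved under small deformations, where $p^{-1}(E)$ is two disjoint copies of $\FF_2$ when $d_1$ is even.
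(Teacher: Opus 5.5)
Your core argument is the one the paper relies on. The theorem has no separate proof; the preceding Proposition and Remark, modelled on the type II argument for $d_1$ even, supply exactly your ingredients:
\begin{enumerate}
\item $H^0(\cT_{X'})=0$ and $H^1(\cN_B)=0$ make $H^1(\cT_{X'})$ an extension of $H^1(\cT_{Y'})\cong H^1(\cT_{\FF_2})$ by $H^0(\cN_B)/H^0(\cT_{Y'})$;
\item $H^1(\cT_{Y'}\otimes L^\vee)=0$ rules out anti-invariant directions;
\item $H^1(\cN_B)=0$ and $H^2(\cT_{\FF_2})=0$ give unobstructedness.
\end{enumerate}
Your explicit family makes precise what the paper only asserts in its Remark: you use $\cO_T(-2\si-4\ell)=\cO_T(K_T)$, so that $Y'=\PP(\cO_T\oplus\cO_T(K_T))$ deforms with $T$ to the type II bundle, extend $B_0$ and $L$, and apply the surjective Kodaira--Spencer criterion. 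One slip: in type III $h^0(\cT_{Y'})=17$, not $16$, so the count should read $(h^0(\cN_B)-17)+1$. The injectivity and surjectivity you want follow from exactness anyway.

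The step that fails as written is your opening reduction, ``deformations of $X$ and of $X'$ agree''.

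First, in type III the map $X'\to X$ does not only contract $p^{-1}(E)$. Since $-K_T\cdot\si=0$, it also contracts the ruled surface $p^{-1}(\pi^{-1}(\si))$ onto a curve; this is the curve of canonical singularities of $X$. That exceptional surface disappears in the $\FF_2\rightsquigarrow\FF_0$ direction, so you are not deforming the contraction of a fixed configuration.

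More seriously, $\mathrm{Def}(X)$ is strictly larger than the image of $\mathrm{Def}(X')$. The $t$-direction of Example~\ref{type II and III} smooths $X$ to double covers of $\PP^3$, which have $b_2=1$. By contrast $b_2(X')\ge 4$: there are three classes pulled back from $Y'$, plus the anti-invariant class $E_1-E_2$ of the two components of $p^{-1}(E)$. So these smoothings cannot come from deformations of $X'$. Consistently, your base has dimension $h^1(\cT_{X'})=\binom{2d_1+3}{3}-17$, one less than the dimension $\binom{2d_1+3}{3}-16$ of the locus of double covers of $\PP^3$.

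What your argument proves, like the paper's computation, is that the Kuranishi space of $X'$ is smooth at the Type III point. Equivalently, the boundary locus of Type II/III threefolds is smooth there. To get smoothness of $\cM_d^{\mathrm{can}}$ itself at $[X]$ you need an additional input on $X$. One route is unobstructedness of $\QQ$-Gorenstein deformations of the pair $(Y,\frac12 B_{d,Y})$, as in the paper's Corollaries on $\overline{\cN}_{d,4}$, together with control of the anti-invariant deformations of $X$. The paper is silent on this point too, but you should either restrict your claim to $\mathrm{Def}(X')$ or supply that step, rather than assert that the two deformation spaces coincide.
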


\begin{Rmk}
    If $d_1$ is odd then we expect $\cM_d^\mathrm{can}$ is singular at Type III, i.e. the obstruction map from $H^1(\cT_{Y'}\otimes L^\vee)$ to $H^2(\cT_{Y'}\otimes L^\vee)$ is nonzero. If $d_1=5$ then the moduli space at Type III is singular by \cite[Theorem 1.3]{CHJ24}, because there is no extra threefold with $p_g=4$ and ${\rm Vol}=2$ which is not a double cover.
\end{Rmk}

\begin{Rmk}
 In $d_1 = 5$ case, we can actually show that $h^2(\cT_{X'}) = 2$ as follows. Notice that $H^2(p^*\cT_{Y'}) = H^2(\cT_{Y'}) \oplus H^2(\cT_{Y'}\otimes L^\vee)$, and the invariant part $H^2(\cT_{Y'})$ maps isomorphically onto $H^2(\cN_B)$ by the same argument as Proposition \ref{prop:h2(TX)}. We thus see that $H^2(\cT_{X'})$ is isomorphic to the anti-invariant part $H^2(\cT_{Y'}\otimes L^\vee)$, which has dimension $2$.
    
In the general case $d_1 \geq 7$, we still have $H^2(\cT_{X'}) \cong H^2(\cT_{Y'}\otimes L^\vee)$ by the same argument; however, the dimension of the latter space is more difficult to compute.
\end{Rmk}

Even though we cannot prove the moduli space $\cM_d^\mathrm{can}$ at Type III is singular when $d_1$ is odd and $d_1\ge 7$. But the following corollary can be proven using \cite[Lemma 5.4]{ADL23}, the vanishing $H^1(\cO_Y(B))=0$, and \cite[Proposition 3.3]{Has99}.

\begin{Cor}
For an even $d$, the moduli space $\overline{\cN}_{d, 4}$ is smooth at $[(Y, B_{d, Y})]$ if $Y$ is the type III, they give a codimension 2 boundary in $\overline{\cN}_{d, 4}$.
\end{Cor}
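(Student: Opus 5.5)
The plan is to mirror the proof of the corresponding statement for type II, namely the Corollary at the end of Section~\ref{sec:Type_II}. I would treat smoothness of $\overline{\cN}_{d,4}$ at $[(Y,B_{d,Y})]$ as unobstructedness of the $\QQ$-Gorenstein deformations of the pair $(Y,B_{d,Y})$ with $Y\cong\PP(1,1,2,4)$ of type III.

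\textbf{Step 1: unobstructedness.} By \cite[Lemma 5.4]{ADL23}, the $\QQ$-Gorenstein deformations of $Y$ are unobstructed, i.e.\ the relevant obstruction space $H^2(\cT_Y)$ vanishes. Since $Y$ is a toric weighted projective space, this can also be checked directly with the Euler sequence on $\PP(1,1,2,4)$. Next, $d$ is even, so $B_{d,Y}\in|\cO_Y(d)|$, and by the type III analogue of Proposition~\ref{surface in type II} it is a general member avoiding the vertex. Hence $B_{d,Y}$ is Cartier and lies in the smooth locus away from the singular curve, or meets it transversally. Then \cite[Proposition 3.3]{Has99} gives the exact sequence relating $\cT_{(Y,B)}$, $\cT_Y$ and the normal sheaf $\cO_B(B)$. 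Using
\[
H^1(\cO_B(B))\hookleftarrow H^1(\cO_Y(B))=0,\qquad H^2(\cO_Y)=0,
\]
I obtain $H^1(\cO_B(B))=0$, and therefore $H^2(\cT_{(Y,B)})=0$. The vanishing $H^1(\cO_Y(B))=0$ follows from Bott-type vanishing for $\cO(d)$ on a weighted projective space. So the obstruction space vanishes and the moduli space is smooth at the pair.

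\textbf{Step 2: codimension count.} I would compute the dimension of the type III locus. Pairs $(\PP(1,1,2,4),B)$ are given by weighted degree $d$ equations modulo $\mathrm{Aut}\,\PP(1,1,2,4)$. This automorphism group has dimension
\[
4+3+\bigl(h^0(\cO(4))-1\bigr).
\]
The moduli of $(\PP^3,B_d)$ has dimension $\binom{d+3}{3}-16$. Alternatively, via the double cover tables in the preceding Proposition, the type II locus has codimension $1$ (one less modulus, from $h^1$). Type III lies in the closure of type II as the degeneration $\FF_0\rightsquigarrow\FF_2$ and costs one further parameter $\lambda$, in the family $ty+\lambda x_3^2+\cdots$ of Example~\ref{type II and III}. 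So it is a two-parameter degeneration ($t=\lambda=0$) of the smooth family. Its locus therefore has codimension $2$, and at smooth points the versal family is transversal in both parameters.

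The main obstacle is justifying that the versal deformation of the pair actually realizes both directions $t$ and $\lambda$ independently, so that the codimension is exactly $2$. I would settle this by the explicit two-parameter family of Example~\ref{type II and III}, together with a tangent-space count showing the map $\lambda$-direction $\to H^1(\cT_{Y'})\cong H^1(\cT_{\FF_2})\cong\CC$ is nonzero.
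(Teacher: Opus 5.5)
Your Step 1 is the paper's argument. The paper proves this corollary only by invoking \cite[Lemma 5.4]{ADL23}, the vanishing $H^1(\cO_Y(B))=0$, and \cite[Proposition 3.3]{Has99}. Two small corrections there. First, under $Y\cong\PP(1,1,2,4)$ the degeneration of $\cO_{\PP^3}(1)$ is $\cO(2)$, via $[u_0:u_1:v:w]\mapsto[u_0^2:u_0u_1:u_1^2:v:w]$. So $B\in|\cO_{\PP(1,1,2,4)}(2d)|$, and it is $2d\in 4\ZZ$ that makes $B$ Cartier and disjoint from the vertex. Second, the Euler sequence on $\PP(1,1,2,4)$ only computes $H^2$ of the reflexive tangent sheaf. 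Since $Y$ is singular along a curve, that is not by itself the full obstruction space, so the "direct check" does not replace the citation.

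The genuine gap is in Step 2, which the paper does not spell out but which the codimension claim needs.

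\begin{itemize}
\item Your count of $\dim\mathrm{Aut}\,\PP(1,1,2,4)$ is off by one. The graded automorphisms are $(u_0,u_1)\mapsto A(u_0,u_1)$ (4 parameters), $v\mapsto av+q(u_0,u_1)$ (4 parameters, not 3), and $w\mapsto$ any element of the degree-$4$ part (10 parameters), modulo $\CC^*$. This gives $17$, consistent with $h^0(\cT_{Y'})=17$ in the type III table versus $16$ for type II.
\item With your value $16$, the count $\binom{d+3}{3}-1-\dim\mathrm{Aut}$ would put type III in codimension $1$, contradicting your conclusion.
\item With $17$ the count works. You also need $h^0(\cO_{\PP(1,1,2,4)}(2d))=\binom{d+3}{3}$; for instance $h^0(\cO_Z(d))-h^0(\cO_Z(d-2))$ telescopes for $Z=\PP(1^4,2)$. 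Together with finiteness of $\mathrm{Aut}(Y,B)$ for a stable pair, this gives a locus of dimension $\binom{d+3}{3}-18$ inside a space of dimension $\binom{d+3}{3}-16$. That is codimension exactly $2$.
\end{itemize}

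Your fallback argument does not reach this. Independence of the $t$- and $\lambda$-directions only shows codimension $\ge 2$, and so does the observation that type III is a proper closed subset of the closure of the type II divisor. The missing reverse inequality needs either the corrected parameter count, or the fact that the $\QQ$-Gorenstein deformation space of $Y$ is exactly two-dimensional. The latter must be combined with smoothness of the forgetful map from deformations of $(Y,B)$ to those of $Y$, which follows from $H^1(\cO_B(B))=0$. So the obstacle you single out, transversality of both directions, is not the one that actually needs work.
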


\section{Boundary divisors from the type IV}\label{sec:type IV}

We begin with the description of type IV. Let $Y_0$ be the $\PP(1,2,3)$-bundle over $\PP^1$ as in \cite{ADL23}. One has $-K_{Y_0}=6\Sigma + 12 F$, where $\Sigma $ is the tautological divisor $\cO(1)$ and $F$ is the fiber. Fiberwise, we have that $\Sigma$ is the divisor defined by $(x=0)$ in $\PP(1,2,3)$, where $x$ is the coordinate of $\PP(1,2,3)$ of weight $1$. There exist singularities at $[0:1:0]$ and $[0:0:1]$ of type $\frac{1}{2}(1,1)$ and $\frac{1}{3}(1,2)$ respectively, fiberwise.

As in \cite{ADL23}, we consider $\pi_1: Y_1 \to Y_0$  the $(1,2)$ weighted blowup along the smooth curve which intersects the point $[0:1:1]$ in $\Sigma$. Let $D_1$ be the exceptional divisor and $\Sigma_1$ be the proper transform of $\Sigma$ on $Y_1$. Then 
\( -K_{Y_1}= 6 \Sigma_1+4D_1+12 F.\)
The map to its canonical model $\psi:Y_1 \to Y$ contracts $\Sigma_1$ to a point. The relative canonical model $\pi_{\flat}: Y_1 \to Y_\flat$ over $\PP^1$ contracts $\Sigma_1\cap F$ to a $D_5$ singularity.

Clearly, there exists a crepant resolution of  singularities ${\pi}: \tilde{Y} \to Y_1$. We have $\tilde{E_1},\tilde{E_2}$ over the $\frac{1}{3}(1,2)$ singularity with $\tilde{E_2}$ intersecting $\tilde{\Sigma}$, $\tilde{E_3}$ over the original $\frac{1}{2}(1,1)$, and $\tilde{E_4}$ over the $\frac{1}{2}(1,1)$ singularity at $\Sigma_1 \cap D_1$. 

Then 
\[ -K_{\tilde{Y}}= 12 F + 4\tilde{D} + 6 \tilde{\Sigma}+2 \tilde{E_1}+ 4 \tilde{E_2} + 3 \tilde{E_3}+ 5 \tilde{E_4}\]
where $\tilde D$ (resp. $\tilde\Sigma$) is the proper transform of $D_1$ (resp. $\Sigma_1$).

Let $\tau: \tilde{Y} \to Y'$ be the contraction that contracts $\tilde{E_2}, \tilde{E_3}, \tilde{E_4}$ and $\tilde{\Sigma}$. Then $Y'$ is a crepant partial resolution of $Y_\flat$ so that each fiber is a partial resolution of the $D_5$ singularity to a $D_4$ singularity. 
We summarize the above description of birational models into the following diagram. Note that $\pi, \pi_\flat, \tau, \tau_\flat$ are crepant.

\begin{equation}\label{comp}
\begin{tikzcd}
    & \tilde{Y} 
    \ar[dl,"\pi"']
    \ar[dr, "\tau"] & \\
    Y_1 
    \ar[dr, "\pi_\flat"]
    \ar[d, "\pi_1"']
    \ar[ddr, "\psi"'] &  & 
    Y'
    \ar[dl, "\tau_\flat"']
    \ar[d, "\varphi"]
    \ar[ddl, "\phi"] \\[+2ex]
    Y_0 \ar[d] & Y_\flat \ar[d] & \mathbb{F}_4 \\[+2ex]
    \mathbb{P}^1 & Y & 
\end{tikzcd}
\end{equation}
% \begin{eqnarray}\label{comp}
% \xymatrix{
% & \tilde{Y} \ar[dl]_{\pi}\ar[dr]^{\tau} & \\
% Y_1 \ar[dr]^{\pi_\flat}\ar[d]_{\pi_1}\ar[ddr]_{\psi} &  & Y' \ar[dl]_{\tau_\flat}\ar[d]^{\varphi} \ar[ddl]^{\phi}\\
% Y_0 \ar[d] & Y_\flat \ar[d] & \mathbb{F}_4 \\
% \mathbb{P}^1 & Y & 
% }
% \end{eqnarray}

Note also that 
\[ \tau^*D'=\tilde{D}+\frac{1}{2}\tilde{E_2}+\tilde{\Sigma}+\frac{1}{2}\tilde{E_3}+\tilde{E_4},\] and 
\[ \tau^*E'_1= \tilde{E_1}+\tilde{E_2}+\tilde{\Sigma}+ \frac{1}{2}\tilde{E_3}+\frac{1}{2}\tilde{E_4}.\] Therefore, 
\[ -K_{Y'}=12F+4D'+2E'_1. \]  We have $\psi^*H = 3F+D_1 +\frac{3}{2}\Sigma_1$ on $Y_1$ and $\phi^*H=3F+D'+\frac{1}{2}E'_1$ on $Y'$, where $H$ is the hyperplane section on $Y$.

\begin{Prop}\label{type IV} Let $Y$ be the type IV degeneration of $\PP^3$. Let $(Y, B_{d, Y})$ be a $\QQ$-Gorenstein degeneration of the pairs $(\PP^3, B_d)$ with $dK_Y+4B_{d, Y}\sim 0$. Then
\begin{itemize}
    \item If $d$ is odd with $d\ge 5$, then it does not occur \cite[Theorem 4.35]{DeV22}.
    \item If $d$ is even with $d\ge 4$ and $d\notin 4\ZZ$, then it does not occur.
    \item If $d$ is even with $d\ge 4$ and $d\in 4\ZZ$, then $B_{d, Y}$ is a smooth surface satisfying the following:
    \begin{itemize}
    \item Consider a general fiber $\tilde S$ of a weak Del Pezzo surface of degree 4 fibration $\tilde Y\to\PP^1$ in the above description~(\ref{comp}). Then the curve $C=B_{d, Y}\cdot \tilde S$ intersects $\frac{d}{2}$ number of points with multiplicity at each fiber of the conic fibration of $\tilde S$, and $C$ does not meet the $D_5$-configuration in $\tilde S$.
    \item $B_{d, Y}$ does not contain the unique singularity $p$ of $Y$.
    \item $B_{d, Y}$ is composed of a pencil of curves.
    \end{itemize}
\end{itemize}
\end{Prop}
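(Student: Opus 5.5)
The plan is to translate the condition on $B_{d,Y}$ into a divisor class on the crepant models and test integrality and positivity there. Since $dK_Y+4B_{d,Y}\sim 0$, we have $B_{d,Y}\sim_{\QQ}\frac{d}{4}(-K_Y)=dH$, where $H$ is the degeneration of $\cO_{\PP^3}(1)$ (so $-K_Y=4H$). All maps in (\ref{comp}) from $\tilde Y$ are crepant, so we can pull back to $\tilde Y$ and $Y'$. The formulas already recorded give
\[
\phi^*B_{d,Y}=d\,\phi^*H=3dF+dD'+\tfrac{d}{2}E'_1,\qquad
\psi^*B_{d,Y}=3dF+dD_1+\tfrac{3d}{2}\Sigma_1 .
\]
The odd case is quoted from \cite[Theorem 4.35]{DeV22}. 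For $d$ even we study the local class of $B_{d,Y}$ at the unique singular point $p$ of $Y$.

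\textbf{Nonexistence for $d\equiv 2 \pmod 4$.} The point is that $H$ has Cartier index $4$ at $p$. First we pin down the local class group of $Y$ at $p$. Since $\psi$ contracts exactly $\Sigma_1$, it is generated by the images of $D_1$ and the fibre class, and the relation comes from $\Sigma_1$. We check that the class of $H$ in it has order exactly $4$. If $B_{d,Y}$ is a Weil divisor with $B_{d,Y}\sim dH$ on the total family, then the class of $B_{d,Y}$ at $p$ is $d[H]$. For $d\equiv 2\pmod 4$ this class is nontrivial of order $2$, so $B_{d,Y}$ is not Cartier at $p$.

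We then argue as in \cite{DeV22}: the surface $B_{d,Y}$ must be $\QQ$-Cartier, and the pair $(Y,(4/d+\epsilon)B_{d,Y})$ must be slc. Consequently $B_{d,Y}$ passes through $p$. We compute the log discrepancy of $\tilde\Sigma$, equivalently of the exceptional divisor over $p$ on $\tilde Y$. Its coefficient in $\tau^*$ and $\psi^*$ of $B_{d,Y}$ is $\frac{3d}{2}$ or $\frac d2$ plus a positive contribution from $B$. Since $p$ is a canonical but not terminal Gorenstein point, that divisor has discrepancy $0$. So $a(\tilde\Sigma;Y,(4/d+\epsilon)B)=-(4/d+\epsilon)\,\mathrm{mult}<-1$ once the multiplicity forced by non-Cartierness is at least $d/4$. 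This contradicts lc. This multiplicity estimate is the step I expect to be the main obstacle. I would first try to bound it below by intersecting with a curve in $\tilde\Sigma$, using that $\tilde\Sigma\cong$ a surface whose restriction of $\tau^*H$ has denominator $2$.

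\textbf{The case $4\mid d$.} Here $dH$ is Cartier at $p$. A general member of $|dH|$ is base point free near $p$ because $4H$ is Cartier and $|{-K_Y}|$ is very ample ($Y\subset\PP^{34}$). By Bertini, $B_{d,Y}$ is smooth and misses $p$. On a general fibre $\tilde S$ of $\tilde Y\to\PP^1$, which is a weak del Pezzo of degree $4$, we restrict:
\[
C=\tau^*\phi^*B\,|_{\tilde S}=d\bigl(\tilde D+\tfrac12\tilde E_2+\tilde\Sigma+\tfrac12\tilde E_3+\tilde E_4\bigr)|_{\tilde S}.
\]
Intersecting with a conic-fibre class $f$ gives $\frac d2$, using $D'\cdot f=0$ and $E'_1\cdot f=1$. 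The $D_5$-chain is exactly the set of curves on which $\psi^*H$ is trivial, so $C$ is disjoint from it. Finally, $B_{d,Y}$ inherits the fibration $\tilde Y\to\PP^1$, giving the pencil structure on $B_{d,Y}$.
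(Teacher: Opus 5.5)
There is a genuine gap in your nonexistence argument for $d\equiv 2 \pmod 4$. The first half is fine: $dH$ has order $2$ in the local class group, so $B_{d,Y}$ must pass through $p$. The failure is in turning this into a contradiction via lc. This is not just a missing estimate; the estimate is false.

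The numbers $\frac{3d}{2}$ and $\frac d2$ are the coefficients of $\Sigma_1$ and $E'_1$ in the pullback of the particular divisor $dH$ with $H=\psi_*(3F+D_1)=\phi_*(3F+D')$, which passes through $p$ very deeply. They say nothing about an arbitrary $B\sim dH$. For every $d\equiv 2\pmod 4$ the local class of $B$ at $p$ is that of $2H$, so non-Cartierness cannot force a multiplicity that grows with $d$.

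Here is an explicit example. Take $d=4k+2$ and use $\varphi^*\sigma=2D'$ and $\varphi^*\ell=F$. Then
\[
\phi^*(dH)-E'_1=k\,\phi^*(-K_Y)+\varphi^*(\sigma+6\ell)
\]
is base point free on $Y'$. A general member $B'$ meets $E'_1$, since its restriction is $\equiv\sigma+6\ell\neq 0$. So $B=\phi_*B'\sim dH$ contains $p$, yet
\[
f^*B=\tilde B+\tilde E_1+\tilde E_2+\tilde\Sigma+\tfrac12\tilde E_3+\tfrac12\tilde E_4 .
\]
Every crepant divisor over $p$ therefore has multiplicity $\le 1<d/4$. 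The lc condition at the crepant divisors over $p$ thus cannot rule out $p\in B_{d,Y}$; your step ``multiplicity $\ge d/4$'' must fail.

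The missing ingredient is the statement that $B_{d,Y}$ never contains $p$, for every pair under consideration. The paper does not derive this from lc on $Y$; it imports it from the proof of Case 2 of \cite[Theorem 4.35]{DeV22}, which works with the degeneration setting. Once this is granted, the argument finishes as follows:
\begin{itemize}
\item $B_{d,Y}$ is Cartier, $f^*B_{d,Y}$ is its strict transform, and $C$ misses the $D_5$-configuration.
\item The double fibre of the conic fibration is $\tau^*(2D')|_{\tilde S}=(2\tilde D+\tilde E_2+2\tilde\Sigma+\tilde E_3+2\tilde E_4)|_{\tilde S}$. Its only component outside the $D_5$-configuration is $\tilde D|_{\tilde S}$, so $2\,C\cdot\tilde D|_{\tilde S}=\frac d2$, which forces $4\mid d$.
\item Your local class group observation would equally close the argument at this point.
\end{itemize}

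For the case $4\mid d$, your Bertini argument only gives $p\notin B_{d,Y}$ and smoothness for a general member. The assertion $p\notin B_{d,Y}$ is needed for the given pair, and it is used that way in the subsequent smoothness theorem. Again the paper relies on DeVleming for this.

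A minor slip: your displayed formula for $C$ omits the term $\frac d2\,\tau^*E'_1|_{\tilde S}$, which your intersection count $C\cdot f=\frac d2$ actually uses.
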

\begin{proof} If $d$ is odd with $d\ge 5$, then the proof is obtained by \cite[Theorem 4.35]{DeV22}. 

Suppose $d$ is even. Then we prove by using the argument in the proof of Theorem 4.35 in \cite{DeV22} and the explicit description of $Y$ in \cite{ADL23} and \cite{HP24}. Note that our $\tilde Y$ in the above description~(\ref{comp}) is $X'$ in the proof of Theorem 4.35 in \cite{DeV22}, and we have a weak Del Pezzo surface fibration $\tilde Y\to\PP^1$. Then, due to the proof of Case 2  in \cite[p. 1362]{DeV22}, $B_{d, Y}$ does not contain the unique singularity $p$ of $Y$. Therefore, $C$ does not meet the $D_5$-configuration in $\tilde S$. 

Let $f:\tilde Y\to Y$ be a resolution in Section 2. Since $dK_Y+4B_{d, Y}\sim 0$ and $K_{\tilde Y}=f^*K_Y$, we still have 
$$dK_{\tilde Y}+4B_{d, \tilde Y}\equiv 0.$$ 
We also have $B_{d, \tilde Y}=B_{d, Y}$ because $B_{d, Y}$ does not contain the unique singularity $p$ of $Y$. Notice that $K_{\tilde Y}\cdot(\text{conic fiber})=-2$, so $C=B_{d, Y}\cdot \tilde S$ intersects $\frac{d}{2}$ number of points with multiplicity at each fiber of the conic fibration of $\tilde S$. But the conic fibration of a weak Del Pezzo surface with $D_5$ configuration has one double line ($2\tilde D\cdot \tilde S$ in our notation) in the conic fibration. Therefore, if $d\not\in 4\ZZ$, then it cannot occur because the intersection number between $C$ and this line is $\frac{d}{2}$, which is odd.

Finally, if $d\in 4\ZZ$, then a weak Del Pezzo surface fibration $\tilde Y\to\PP^1$ implies that $B_{d, Y}$ is composed of a pencil of curves since $B_{d, Y}$ does not meet the $cD_5$ singular curve in $Y_\flat$ (the above description~(\ref{comp})).
\end{proof}

\begin{Rmk}
If $d=4$, then a general quartic K3 surface can degenerate to an elliptic K3 surface. Similar phenomena can happen when $d\in 4\ZZ$.
\end{Rmk}

\begin{Thm}
    The moduli space $\overline{\cN}_{(d, 4)}$ is smooth at the pair $(Y, B_{d, Y})$ if $Y$ is type IV.
\end{Thm}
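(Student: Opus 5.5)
By Proposition~\ref{type IV}, the only case to treat is $d\in 4\ZZ$. There $B:=B_{d,Y}$ is a smooth surface that misses the unique singular point $p$ of $Y$. The plan follows the strategy the paper already used for types II and III. We show that the deformations of the pair $(Y,B)$ are unobstructed and that $Y$ itself deforms smoothly (its local deformations are all realized by the degeneration to $\PP^3$). We then invoke \cite[Proposition 3.3]{Has99}: if $H^2(\cT_Y)=0$ and $H^1(\cO_Y(B))=0$, then $H^2(\cT_{(Y,B)})=0$ in the sequence $0\to\cT_{(Y,B)}\to\cT_Y\to\cN_{B/Y}\to0$. Since $B\cap\mathrm{Sing}(Y)=\emptyset$, the divisor $B$ is Cartier, $\cN_{B/Y}=\cO_B(B)$, and the local-to-global analysis of pair deformations reduces to the local deformations of $(Y,p)$ together with these two global vanishings.

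First, for $H^1(\cO_Y(B))$ we use $dK_Y+4B\sim0$, so $B\sim \frac d4(-K_Y)$ with $\frac d4\ge1$ an integer. Then $B-K_Y\sim(\frac d4+1)(-K_Y)$ is ample. Since $Y$ is Gorenstein with canonical (hence rational) singularities, Kawamata--Viehweg gives $H^i(\cO_Y(B))=0$ for $i>0$. Using $H^1(\cO_Y)=H^2(\cO_Y)=0$, it follows that $H^1(\cO_B(B))$ injects into $H^2(\cO_Y)=0$.

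Second, for $H^2(\cT_Y)$ we pass to the crepant model $Y'$ (or $\tilde Y$) of diagram~(\ref{comp}). The morphism $\phi:Y'\to Y$ contracts only the section $E_1'$ to $p$, and $\phi_*\cT_{Y'}\subseteq\cT_Y$ with cokernel supported at $p$; hence $H^2(\cT_Y)\cong H^2(\phi_*\cT_{Y'})$. The Leray spectral sequence bounds this by $H^2(\cT_{Y'})$ together with $H^1(R^1\phi_*\cT_{Y'})$, and the latter vanishes because $R^1\phi_*$ is supported at a point. To compute $H^2(\cT_{Y'})$ we use the conic bundle $\varphi:Y'\to\FF_4$ and the sequence $0\to\cT_{Y'/\FF_4}\to\cT_{Y'}\to\varphi^*\cT_{\FF_4}\to0$ (after restricting to the smooth locus or using reflexive hulls). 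Alternatively, we work inside the projective bundle $\PP(V)$, writing $Y'\in|2\zeta-12\ell|$ and using the normal sequence $0\to\cT_{Y'}\to\cT_{\PP(V)}|_{Y'}\to\cO_{Y'}(Y')\to0$. In that route, $H^2(\cT_{\PP(V)}|_{Y'})$ is computed from the relative Euler sequence and Serre duality on $\FF_4$, and $H^1(\cO_{Y'}(Y'))$ from $\mathrm{Sym}$ of $V$ twisted by $\cO(-12\ell)$; these are exactly the routine Leray computations carried out in Sections~\ref{sec:Type_II}--\ref{sec:type_III}. An alternative shortcut is to cite \cite[Lemma 5.4]{ADL23}, which treats deformations of the type IV Fano $Y$ itself, together with the fact that $Y$ is a $\QQ$-Gorenstein smoothable Gorenstein Fano with a single isolated hypersurface-type canonical singularity. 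For Gorenstein Fanos with canonical singularities, $H^2(\cT_Y)$ is dual to $H^1(\Omega_Y^{[1]}\otimes\omega_Y)$, which vanishes by Akizuki--Nakano type vanishing for $-K_Y$ ample.

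The main obstacle is the vanishing of $H^2(\cT_Y)$, or more precisely its local-to-global version $\mathbb T^2_Y$, at the non-$\QQ$-factorial point $p$. The local $T^2$ of the canonical Gorenstein singularity need not vanish, so smoothness requires more than cohomology. Here I would argue as follows. The $\QQ$-Gorenstein deformation functor of $(Y,B)$ maps to that of $Y$ with fibres that are unobstructed by the first step. The deformations of $Y$ inside $\overline{\cN}_{(d,4)}$ are exactly those in the versal family coming from the degeneration of $\PP^3$. Smoothness of that family at $Y$ then follows from \cite[Lemma 5.4]{ADL23}, possibly via the index-one cover if needed. Combining the two pieces gives that $\overline{\cN}_{(d,4)}$ is smooth at $[(Y,B)]$.
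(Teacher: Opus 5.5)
Your skeleton matches the paper's proof. You reduce to $d\in 4\ZZ$, use that $B=B_{d,Y}$ is smooth and misses $p$ (so it is Cartier and $\cN_B=\cO_B(B)$), get $H^1(\cO_Y(B))=0$ from Kawamata--Viehweg, deduce $H^1(\cO_B(B))\hookrightarrow H^2(\cO_Y)=0$, and finish with \cite[Proposition 3.3]{Has99}. Applying Kawamata--Viehweg on $Y$ instead of on $\tilde Y$ (the paper uses $-K_{\tilde Y}=f^*(-K_Y)$ nef and big) changes nothing. Only the numerical class of $B-K_Y$ matters, so the torsion ambiguity in $4B\sim -dK_Y$ is also irrelevant. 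This half is fine.

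The gap is in the other hypothesis: that the type IV threefold $Y$ has unobstructed deformations. The paper takes this from \cite[Lemma 4.15]{ADL23}. The Lemma 5.4 you cite is the one the paper invokes only for types I--III, and the introduction calls type IV the new ingredient not covered there. None of your self-contained substitutes works as written:
\begin{enumerate}
\item \emph{Leray step.} The relevant exact sequence is $H^0(R^1\phi_*\cT_{Y'})\to H^2(\phi_*\cT_{Y'})\to H^2(\cT_{Y'})$. So the extra term is the stalk at $p$ of the skyscraper $R^1\phi_*\cT_{Y'}$, not its $H^1$, and there is no reason for it to vanish.
\item \emph{Computing $H^2(\cT_{Y'})$.} This is not routine, because $Y'$ is singular along a curve. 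Hence $\cT_{Y'}$ is not locally free, and $\cT_{\PP(V)}|_{Y'}\to\cO_{Y'}(Y')$ has cokernel $\mathcal{T}^1_{Y'}$ supported on that curve, which feeds into $H^1$ and $H^2$.
\item \emph{Duality route.} Serre duality gives $H^2(\cT_Y)^\vee\cong\mathrm{Ext}^1(\cT_Y,\omega_Y)$. Vanishing of $H^1(\Omega^{[1]}_Y\otimes\omega_Y)$ only embeds this into $H^0(\mathcal{E}xt^1(\cT_Y,\omega_Y))$. That is a skyscraper at $p$, locally dual to $H^2_p(\cT_Y)\cong T^1_{Y,p}$, which is nonzero since $p$ is smoothable. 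Moreover, Akizuki--Nakano for reflexive forms on singular varieties is not available in general.
\item \emph{Local obstructions.} The claim that $p$ is a hypersurface singularity is unsupported. As you note yourself, even $H^2(\cT_Y)=0$ removes only local-to-global obstructions, not the local ones at $p$. Saying that the relevant deformations are ``those coming from $\PP^3$'' does not show the germ is smooth.
\end{enumerate}
Replace the whole $H^2(\cT_Y)$ discussion by the type IV result \cite[Lemma 4.15]{ADL23}; your argument then coincides with the paper's.
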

\begin{proof}
    By Proposition~\ref{type IV}, we only need to consider the case when $d\in 4\ZZ$. Again, by Proposition~\ref{type IV}, $B_{d, Y}$ is smooth and does not contain the unique singular point $p$ of $Y$. Let $f: \tilde Y\to Y$, then we have
    $$
    H^1(\tilde Y, B_{d, \tilde Y})
    = H^1\prbra*{\tilde Y, K_{\tilde Y}+\prbra*{\frac{d}{4}+1}(-K_{\tilde Y})
    }=\text{0 for $i>0$}
    $$
    by the Kawamata-Viehweg vanishing theorem since $-K_{\tilde Y}=f^*(-K_{Y})$ is nef and big. This implies that $H^1(\cN_{B_{d, \tilde Y}})=0$ because $H^2(\cO_{\tilde Y})=0$. 
    The theorem then follows from \cite[Lemma 4.15]{ADL23} and \cite[Proposition 3.3]{Has99}
\end{proof}

\begin{Cor}\label{surface in type IV}
    If $d\in 4\ZZ$, then there is a smooth boundary divisor in $\overline{\cN}_{d, 4}$ whose general element $B_{d, Y}$ is a smooth surface with a pencil structure.
\end{Cor}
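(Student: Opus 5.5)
The corollary combines Proposition~\ref{type IV} with the theorem just proved, plus a dimension count. For $d\in 4\ZZ$ and $Y$ of type IV, Proposition~\ref{type IV} says a general $B_{d,Y}$ is a smooth surface, does not pass through the singular point $p$, and is composed of a pencil of curves coming from the weak del Pezzo fibration $\tilde Y\to\PP^1$. The preceding theorem says $\overline{\cN}_{(d,4)}$ is smooth at every such $[(Y,B_{d,Y})]$. It remains to show that the locus of pairs with $Y$ of type IV is nonempty and has codimension exactly one near these points.

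\emph{Nonemptiness.} Since $B_{d,Y}$ misses $p$, it is Cartier near $p$. Its class is $\frac d4(-K_Y)$, which is Cartier and base point free for $d\in 4\ZZ$ because $-K_Y$ is Cartier (type IV is Gorenstein). So a general member is smooth, avoids $p$, and $(Y,(4/d+\epsilon)B_{d,Y})$ is klt with ample log canonical divisor. The smoothing condition (iii) follows from $Y$ being a $\QQ$-Gorenstein degeneration of $\PP^3$: $B$ extends over the family because $H^1(\cO_Y(B))=0$, shown in the proof of the theorem via Kawamata--Viehweg vanishing on $\tilde Y$.

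\emph{Codimension one.} By \cite[Lemma 4.15]{ADL23}, $Y$ has a one-parameter $\QQ$-Gorenstein smoothing to $\PP^3$ that is unobstructed, and the local deformation space of $Y$ is smooth with the type IV locus a divisor. The deformation space of the pair maps to it with smooth fibers $H^0(\cN_B)$, because $H^1(\cO_Y(B))=0$, so the pair version is smooth as well \cite[Proposition 3.3]{Has99}. The locus where $Y$ stays of type IV is the preimage of that divisor, hence a divisor in $\overline{\cN}_{(d,4)}$.

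As a cross-check, compute $h^0(\cO_Y(B))-1-\dim\mathrm{Aut}(Y)$ by Riemann--Roch on $\tilde Y$ with $-K_{\tilde Y}$ nef and big, and compare it with $\binom{d+3}{3}-16$.

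\emph{Main obstacle.} The hard part is the codimension-one claim, namely identifying the locally trivial (equisingular) deformations of $Y$ as a hyperplane in the $\QQ$-Gorenstein deformation space. If \cite{ADL23} does not state this directly, I would show the smoothing direction is one-dimensional modulo locally trivial deformations: $T^1$ of the canonical Gorenstein point, in the $\QQ$-Gorenstein sense, contributes one smoothing parameter, and $H^1(\cT_Y)$ accounts for the rest.
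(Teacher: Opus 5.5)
Your argument is essentially the paper's: the corollary is stated there without a separate proof and is read off from Proposition~\ref{type IV} together with the smoothness theorem at type IV pairs (itself resting on \cite[Lemma 4.15]{ADL23}, $H^1(\cO_Y(B))=0$ and \cite[Proposition 3.3]{Has99}); your nonemptiness check is a sound addition, though base point freeness of $\tfrac d4(-K_Y)$ comes from the anticanonical embedding $Y\subset\PP^{34}$, not from $-K_Y$ being Cartier. The paper never argues codimension one (it invokes \cite[Lemma 4.15]{ADL23} only in the smoothness argument), and your local-$T^1$ heuristic does not settle it either, but your cross-check does: $Y$ is unique up to isomorphism and $h^0(\cO_Y(\tfrac d4(-K_Y)))=\binom{d+3}{3}$, so the type IV locus has dimension $\binom{d+3}{3}-1-\dim\mathrm{Aut}(Y)$, and $\dim\mathrm{Aut}(Y)=16$ (the $\PP(1,2,3)$-bundle $Y_0$ has a $20$-dimensional automorphism group, and stabilizing the center of the $(1,2)$ weighted blowup costs $4$ dimensions; consistently, for $d=4$ this gives the $18$-dimensional unigonal quartic K3 locus), which is exactly one less than $\dim\cN_d=\binom{d+3}{3}-16$.
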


\end{document}